\theoremstyle{plain}                              
\newtheorem*{mainthm}{Main Theorem}
\newtheorem{thm}{Theorem}[section]
\newtheorem{prop}[thm]{Proposition}
\newtheorem{defn}[thm]{Definition}
\newtheorem{lem}[thm]{Lemma}
\newtheorem{cor}[thm]{Corollary}
\theoremstyle{definition}                         
\newtheorem*{remark}{Remark} 
\theoremstyle{remark}                             
\newcommand{\R}{\mathbb{R}}                     
\newcommand{\veps}{\varepsilon}        
\newcommand{\del}{\partial}
\newcommand{\restrict}{\upharpoonright}
\DeclareMathOperator{\vol}{vol}
\providecommand{\norm}[1]{\left\lVert#1\right\rVert}       
\providecommand{\abs}[1]{\left\lvert#1\right\rvert}        
\begin{document}

\title{The Liv\v{s}ic equation on differential forms over Anosov flows
  and applications}

\author{Slobodan N. Simi\'c}

\maketitle

\begin{abstract}
  The goal of this paper is to explore the relationship between the
  geometric properties of an Anosov flow on a closed manifold $M$ and
  the analytic properties of its infinitesimal generator $X$ as a
  linear operator on the space of smooth differential forms of all
  degrees. In particular, we study the solvability of the Liv\v{s}ic
  equation $L_X \xi = \eta$ on the space of differential forms and
  show, for instance, that if the Anosov flow is \emph{asymmetric},
  then the equation has a unique solution in the continuous category
  in degrees $2 \leq k \leq n-2$, where $n = \dim M$. Intuitively, an
  Anosov flow is asymmetric if in negative time it shrinks the volume
  of any $(n-2)$-dimensional parallelepiped exponentially fast when at
  least one side of it is in the strong unstable direction. As an
  application, we show that for volume-preserving asymmetric Anosov
  flows, the following result holds: the $L^2$-closure of the image of
  $L_X$ restricted to differential forms of degree $(n-1)$ contains
  the space of $L^2$-exact $(n-1)$-forms if and only if the sum of the
  strong bundles of the flow is uniquely integrable, in which case the
  flow is therefore topologically conjugate to a suspension of an
  Anosov diffeomorphism.
\end{abstract}





\date{\today}


\section{Introduction}
\label{sec:introduction}

Let $X$ be a smooth\footnote{We
  use the terms smooth and $C^\infty$ interchangeably.} vector field on a smooth closed (i.e., compact and without boundary) connected manifold $M$. It is natural to ask: how are various properties of the flow $\Phi$ generated by $X$ related to the properties of the differential operator $X$ (or $L_X$, the Lie derivative) acting on some space of functions, distributions, or differential forms? Much work has been done on this question. Some notable results are those of Liv\v{s}ic \cite{livsic+71, livsic+72} in the 1970's on the equation $X\varphi = f$, nowadays known as the \textsf{Liv\v{s}ic equation}. See Section~\ref{sec:livshitz}.

There have been numerous recent results (cf., e.g., \cite{faure+sjostrand+2011, G+L+P+2013, dyatlov+zworski+2016}; see also \cite{lefeuvre_book+2025} and the sources listed therein) relating the properties of the spectrum of the differential operator $X$ (or a related transfer operator) acting on suitable spaces to the \emph{statistical} properties of $\Phi$. The goal of this paper is to explore what properties of $X$ as a differential operator can tell us about the \emph{geometric} properties of $\Phi$. More precisely, we look at the properties of the Lie derivative $L_X$ acting on the space of differential forms, and ask the following natural question:

\begin{quote}
  \textit{Given a continuous differential form $\eta$, does the Liv\v{s}ic equation $L_X \xi = \eta$ have a continuous solution $\xi$?}
\end{quote}

We show that the answer is affirmative in intermediate degrees (i.e., $2 \leq k \leq n-2$, where $n = \dim M$) for Anosov flows we call asymmetric. In other degrees, we characterize the image of the operator $L_X$.

\begin{defn}
  We call an Anosov flow on $M$ \textsf{asymmetric} if for every $x \in M$ and every $(n-2)$-dimensional parallelepiped $\Pi$ in the tangent space $T_x M$ with least one side of $\Pi$ in the strong unstable space $E^{uu}_x$, then in negative time, the action of derivative of the flow on $\Pi$ shrinks its volume exponentially fast.
\end{defn}

That is, if $\Phi = \{ f_t \}$, then
\begin{displaymath}
  \text{vol}(T_x f_{-t}(\Pi)) \leq C e^{-\lambda t} \text{vol}(\Pi),
\end{displaymath}
for some $C, \lambda > 0$ and all $t \geq 0$. In other words, in negative time, the rate of contraction along the strong unstable direction dominates the joint rate of expansion in the remaining directions (including those in the strong stable bundle). It is clear that this notion makes sense only if $n \geq 4$.

Observe that if $\Phi$ is asymmetric, then the exponential shrinking of the volume also holds for all \emph{lower dimensional} parallelepipeds with at least one side in the strong unstable bundle.

The structure of the paper is the following. In Section~\ref{sec:preliminaries} we review some basic facts about Anosov flows and the space of differential forms as an inner product space. In Section~\ref{sec:livshitz} we prove the Liv\v{s}ic theorem for differential forms in intermediate degrees and related properties of the Lie derivative in other degrees. Our main result is an application of these properties; the proof is given in Section~\ref{sec:proof}.

\begin{mainthm}
  Let $\Phi$ be an asymmetric Anosov flow with infinitesimal generator $X$ on a closed Riemannian manifold $M$ of dimension $n \geq 4$. Then: the $L^2$-closure of the image of $L_X$ on $(n-1)$-forms\footnote{We consider the domain of $L_X$ to be the space of continuous differential forms with a continuous $L_X$-derivative.} contains the space of $L^2$-exact forms, i.e.,
  \begin{displaymath}
    L^2 B^{n-1}(M) \subset \overline{\text{\normalfont image}(L_X)},
  \end{displaymath}
  if and only if the sum of the strong bundles of $\Phi$ is uniquely integrable and the flow is therefore topologically conjugate to a suspension of an Anosov diffeomorphism.
\end{mainthm}

The closure is taken relative to the $L^2$-norm (see Section~\ref{sec:preliminaries}).

\section{Preliminaries}
\label{sec:preliminaries}

\paragraph{Anosov flows.} Fix a non-singular smooth flow
$\Phi = \{ f_t \}$ on a closed Riemannian manifold $M$. Recall that
$\Phi$ is called \textsf{Anosov} if there exists a $Tf_t$-invariant
splitting of the tangent bundle into the strong unstable, center, and
strong stable bundle,
\begin{displaymath}
  TM = E^{uu} \oplus E^c \oplus E^{ss},
\end{displaymath}
such that for all $t \geq 0$, $v \in E^{ss}$ and $w \in E^{uu}$, we
have:
\begin{displaymath}     \tag{$\spadesuit$}
\norm{Tf_t (v)} \leq c e^{-\nu t } \norm{v}
\qquad \qquad \text{and} \qquad
\qquad \norm{Tf_t(w)} \geq c e^{\lambda t} \norm{w},
\end{displaymath}
where $c, \nu$, and $\lambda$ are fixed positive constants, and $E^c$
is spanned by the infinitesimal generator $X$ of the flow. The Anosov
property is independent of the Riemannian metric, since on a compact
manifold the Finsler structures defined by any two continuous
Riemannian metrics are equivalent.

An Anosov flow is of \textsf{codimension one} if $\dim E^{uu} = 1$ or
$\dim E^{ss} = 1$. We will always assume the former. It is
\textsf{volume-preserving} if there exists a $C^\infty$ volume form
$\Omega$ such that $f_t^\ast \Omega = \Omega$, for all $t \in \R$.

It is well-known that the invariant bundles $E^{ss}$, $E^{uu}$,
$E^{cs} = E^c \oplus E^{ss}$, and $E^{cu} = E^c \oplus E^{uu}$ are
uniquely integrable, giving rise to H\"older continuous invariant
foliations \cite{hps77,psw+97} denoted by $W^{cs}, W^{cu}, W^{ss}$,
$W^{uu}$, respectively.

A smooth compact codimension one submanifold $\Sigma$ of $M$ is called
a \textsf{global cross section} for a flow if it intersects every
orbit transversely. If a flow admits a global cross section $\Sigma$,
then every point $p \in \Sigma$ returns to $\Sigma$, defining the
Poincar\'e or first-return map $g: \Sigma \to \Sigma$ of the flow. The
flow can be reconstructed by \textsf{suspending} $g$ under the roof
function equal to the first-return time (cf., e.g., \cite{katok+95}).

\paragraph{Notation, standing assumptions, and facts.}

Below we recall some basic facts, and fix the notation and terminology
used in this paper.

\begin{enumerate}

\item $\Phi = \{ f_t \}$ denotes a $C^\infty$ Anosov flow on a closed
  connected $C^\infty$ Riemannian manifold $M$ of dimension
  $n$.

\item $X$ denotes the associated infinitesimal generator of $\Phi$;
  $L_X$ is the corresponding Lie derivative on tensor fields. The
  restriction of $L_X$ to differential forms of degree $k$ will be
  denoted by $L_X^{(k)}$. When there is little chance of confusion,
  the superscript $k$ will be dropped.

\item $C^1_X$ will denote any space of continuous objects whose
  $L_X$-derivative is continuous. Thus $C^1_X(M)$ is the space of
  continuous function with a continuous $X$-derivative.
  $C^1_X \Lambda^k(M)$ will denote the space of continuous $k$-forms
  $\omega$ such that $L_X \omega$ is continuous.

\item $\Omega$ is a $C^\infty$ volume form invariant under the
  flow. Without loss we assume that $\int_M \Omega = 1$.

\item A standing assumption is that all invariant bundles are
  orientable; otherwise we can pass to a double cover of
  $M$. 

\item $E^{su} = E^{ss} \oplus E^{uu}$; $E^{su}$ is a H\"older
  continuous bundle (cf., \cite{hassel+94, hassel+97, hps77}).

\item If the flow is of codimension one and $n \geq 4$, $E^{uu}$ and
  $E^{cs}$ are both known to be $C^1$ (in fact, $C^{1 + \theta}$, for
  some $0 < \theta < 1$); cf., \cite{hassel+94, hassel+97, hps77}.

\item We denote by $\alpha$ the canonical invariant 1-form defined by:
  \begin{displaymath}
    \ker(\alpha) = E^{su}, \qquad \alpha(X) = 1.
  \end{displaymath}
  The regularity of $\alpha$ is the same as that of $E^{su}$, i.e.,
  H\"older continuous.

  \item We will call a continuous Riemannian metric $g$ on $M$ an
    \textsf{Anosov metric} associated with a fixed Anosov flow $\Phi$
    if relative to $g$, $X$ is orthogonal to $E^{su}$ and
    $g(X,X) = 1$.

  \item For a Riemannian metric $g$, its Riemannian volume form is
    denoted by $\vol(g)$.
    
  \item For an arbitrary continuous Riemannian metric $g$ with
    $\vol(g) = \Omega$ and vectors $v_1, \ldots, v_k$ tangent to $M$
    at the same point, we write
    $\norm{v_1 \wedge \cdots \wedge v_k}_g$ for the $k$-dimensional
    volume with respect to $g$ of the parallelepiped with sides
    $v_1, \ldots, v_k$. Thus
    $\norm{v_1 \wedge \cdots \wedge v_n}_g = \abs{\Omega(v_1,\ldots,
      v_n)}$.

  \item Recall that a continuous 1-form $\omega$ on $M$ is said to
    have an exterior differential in the \textsf{Stokes sense} if
    there exists a continuous 2-form $\xi$ such that
    \begin{displaymath}
      \int_{\del D} \omega = \int_D \xi,
    \end{displaymath}
    for every $C^1$-immersed 2-disk $D$ such that $\del D$ is
    piecewise $C^1$. In that case we write $\xi = d\omega$, specifying
    that this holds in the Stokes sense. The Hartman-Frobenius theorem
    (i.e., P. Hartman's generalization of the classical theorem of
    Frobenius on integrability of plane fields; see \cite{plante72}
    and \cite{hart02}) states that a continuous 1-form $\omega$ is
    integrable if and only if $\omega$ has a continuous exterior
    differential $d \omega$ in the Stokes sense and
    $\omega \wedge d\omega = 0$. Recall that a continuous 1-form
    $\omega$ on $M$ is said to be integrable if the kernel of $\omega$
    as a subbundle of $TM$ is integrable.

\end{enumerate}

\paragraph{The case of codimension one Anosov flows.} We will show
that codimension one Anosov flows in dimensions $n \geq 4$ are asymmetric.

\begin{prop} \label{prop:exp-contr}
  Let $\Phi = \{ f_t \}$ be a volume preserving codimension one Anosov
  flow on a closed manifold $M$ of dimension $n \geq 4$. Assume, without
  loss, that $E^{uu}$ is 1-dimensional and orientable, and let $Y$ be a
  non-vanishing section of $E^{uu}$. Assume as before that with
  respect to a fixed Riemannian metric $g$ and the associated Finsler
  structure on $M$:
  \begin{displaymath}
    \norm{T f_t(v)} \leq c e^{-\nu t} \norm{v},
  \end{displaymath}
  for all $t \geq 0$ and $v \in E^{ss}$, where $c, \nu > 0$ are as in
  $(\spadesuit)$. Then for every $p \in M$, all linearly independent
  unit vectors $v_1, \ldots, v_{n-3} \in T_p M$, and $t > 0$, we have
  \begin{displaymath}
    \norm{T_p f_{-t} \left( v_1 \wedge \cdots \wedge v_{n-3}
      \wedge Y_p \right)}_g \leq C e^{-\nu t},
  \end{displaymath}
  where $C > 0$ is independent of $p$,
$v_i$'s, and $t$.
\end{prop}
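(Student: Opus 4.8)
The plan is to reduce the $(n-2)$-dimensional volume estimate to the known exponential contraction on $E^{ss}$ (the hypothesis $\norm{Tf_t(v)} \le ce^{-\nu t}\norm{v}$ for $v \in E^{ss}$) by exploiting the product-type structure of the bundle $TM = E^{uu} \oplus E^c \oplus E^{ss}$ with $\dim E^{uu} = 1$. First I would reduce to a convenient frame: since all norms in question are comparable to norms coming from an Anosov metric (continuous Riemannian metrics on a compact manifold give uniformly equivalent Finsler structures), and since $k$-dimensional volume $\norm{v_1 \wedge \cdots \wedge v_k}_g$ only depends on the span of the $v_i$, it suffices to prove the estimate when $v_1, \dots, v_{n-3}$ together with $Y_p$ span a fixed $(n-2)$-dimensional subspace; moreover, by multilinearity and the uniform bound on the angle between the invariant subbundles, I can replace an arbitrary such subspace by one adapted to the splitting. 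Concretely, the wedge $v_1 \wedge \cdots \wedge v_{n-3} \wedge Y_p$ can be rewritten, up to a bounded factor, as a wedge of the form $(\text{a unit vector in } E^{cu}_p) \wedge w_1 \wedge \cdots \wedge w_m \wedge Y_p$ where the $w_j$ span a subspace of $E^{ss}_p$ — the point being that the $(n-2)$-plane must meet $E^{ss}_p$ (which has dimension $n-2$) in dimension at least $n-2 - 2 = n-4$, and the "missing" two dimensions are absorbed into $E^{cu}_p = E^c \oplus E^{uu}$, which is only $2$-dimensional.

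Next I would do the actual estimate. Since $Y$ spans $E^{uu}$, a $1$-dimensional invariant subbundle, volume-preservation of the flow forces the "total Jacobian" to be $1$: the product of the expansion rate on $E^{uu}$, the rate on $E^c$ (which is trivial, $X$ is preserved), and the Jacobian on $E^{ss}$ multiply to a bounded function. Hence the expansion rate of $f_{-t}$ on the line $E^{uu}$ in negative time is controlled by the contraction rate of $f_{-t}$ on all of $E^{ss}$ in negative time, i.e., by the expansion on $E^{ss}$ in positive time, which by $(\spadesuit)$ is at worst some fixed exponential $e^{\mu t}$. The key computation is then: $\norm{T_p f_{-t}(v_1 \wedge \cdots \wedge v_{n-3} \wedge Y_p)}_g$ decomposes (using that $f_{-t}$ respects the splitting and the frame is adapted) as a product of (i) the factor from the $E^{cu}$-part, bounded by $e^{\mu' t}$ for a fixed $\mu'$, times (ii) the factor $\norm{T_p f_{-t} Y_p}_g$ from the $E^{uu}$-line, times (iii) the $(n-4)$-or-fewer-dimensional volume contraction on $E^{ss}$, bounded by $\big(ce^{-\nu t}\big)^{\dim(\text{span}(w_j))}$. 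Multiplying the $E^{uu}$-expansion against the \emph{full} $E^{ss}$-contraction that volume-preservation supplies, the net rate is $\le Ce^{-\nu t}$: precisely, $\norm{Tf_{-t}Y}_g \cdot \norm{Tf_{-t}\restrict E^{ss}}_{\mathrm{vol}} \le C'$ by invariance of $\Omega$, and one leftover full contraction factor $ce^{-\nu t}$ survives because the $(n-2)$-plane omits at least one $E^{ss}$-direction beyond what is paired against $E^{uu}$ (here $n \ge 4$ is exactly what guarantees a nonzero surplus; when $n = 4$, the plane is spanned by one $E^{cu}$ vector and $Y_p$ plus one $E^{ss}$ vector, giving a single clean $e^{-\nu t}$).

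The main obstacle is the bookkeeping in the first step — replacing an \emph{arbitrary} $(n-2)$-plane containing $Y_p$ by an adapted frame without losing uniformity in $p$. This requires: (a) a uniform lower bound on the angles between $E^{uu}$, $E^c$, $E^{ss}$ (standard on a compact manifold, from continuity of the splitting), so that projecting the $v_i$ onto the summands and Gram–Schmidt-ing introduces only a bounded multiplicative error independent of $p$; and (b) care that after projecting away the $E^{cu}$-components, the remaining vectors still span a subspace of $E^{ss}$ of the correct dimension — which follows from dimension count as above. Once the frame is adapted and the angle bounds are in hand, the estimate is a direct product of the three factors, and the constant $C$ depends only on $c$, $\nu$, the metric $g$, the angle bounds, and $\norm{\Omega}$, hence not on $p$, the $v_i$, or $t$. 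A secondary point to handle cleanly is that $f_{-t}$ need not preserve an orthonormal adapted frame, only the \emph{subbundles}; but since we measure volumes, not individual vectors, and the subbundle angles stay bounded below for all time (invariance plus compactness), this causes only bounded distortion.
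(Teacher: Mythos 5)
Your core mechanism is the paper's: invariance of $\Omega$ pairs the contraction of $E^{uu}$ under $f_{-t}$ against the full $(n-2)$-dimensional expansion of $E^{ss}$, and since the plane spanned by $v_1,\dots,v_{n-3},Y_p$ meets $E^{ss}_p$ in dimension at most $n-3$, one stable direction is left over and its expansion factor $\geq c^{-1}e^{\nu t}$ lands in the denominator. Your closing sentence (``$\norm{Tf_{-t}Y}\cdot\norm{Tf_{-t}\restrict E^{ss}}_{\mathrm{vol}}\leq C'$ by invariance of $\Omega$, plus one surviving contraction factor'') is exactly right. The paper implements this with much less bookkeeping: it reduces at once to the worst case $v_i\in E^{ss}_p$, wedges the given $(n-2)$-parallelepiped with a unit vector $w_t\in E^{ss}_p$ chosen so that $T_pf_{-t}(w_t)$ is orthogonal to the image parallelepiped and with $X_p$, bounds the resulting $n$-volume by a constant using $f_{-t}^\ast\Omega=\Omega$, and divides by $\norm{T_pf_{-t}(w_t)}\geq c^{-1}e^{\nu t}$. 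The vector $w_t$ is precisely your ``omitted stable direction,'' and this trick avoids the adapted-frame and angle-bound apparatus entirely (an Anosov metric is used so that the $X_p$ factor splits off cleanly).

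Two of your intermediate statements are wrong as written, and the ``direct product of the three factors'' does not close until they are repaired. First, factor (iii) has the direction of time backwards: the hypothesis $\norm{Tf_t v}\leq ce^{-\nu t}\norm{v}$ on $E^{ss}$ gives $\norm{Tf_{-t}v}\geq c^{-1}e^{\nu t}\norm{v}$, so under $f_{-t}$ the $m$-dimensional volume of the $E^{ss}$-block is multiplied by at least $(c^{-1}e^{\nu t})^m$ --- it expands, and the correct upper bound is $\det(Tf_{-t}\restrict E^{ss})\cdot(ce^{-\nu t})^{\,n-2-m}$ (the product of the top $m$ singular values), not $(ce^{-\nu t})^m$. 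Second, if the $E^{cu}$-factor were only $O(e^{\mu' t})$ for an unspecified $\mu'>0$, the product would not be $\leq Ce^{-\nu t}$; in fact that factor is bounded by a constant, since $X$ is flow-invariant and $E^{uu}$ contracts under $f_{-t}$. With these corrections the accounting does work: $\norm{Tf_{-t}Y_p}\leq C'\det(Tf_{-t}\restrict E^{ss})^{-1}$ cancels the determinant, $n-2-m\geq 1$ leaves the single factor $ce^{-\nu t}$, and uniformity in $p$ follows from compactness and the uniform angle bounds between the invariant subbundles, as you indicate. So the proposal is correct in substance and essentially follows the paper's route, but the growth-rate bookkeeping needs to be redone carefully before it is a proof.
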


\begin{proof}
  We first assume that $g$ is an Anosov metric for $\Phi$. Let us deal
  with the worst-case scenario, i.e., when $v_1, \ldots, v_{n-3}$ are
  all in $E^{ss}_p$.

  Fix $t > 0$. Let $w_t \in E^{ss}_p$ be a unit vector such that
  $T_p f_{-t}(w_t)$ is orthogonal to the subspace spanned by
  $Y_{f_{-t}(p)}$ and $T_p f_{-t}(v_i)$, for $i = 1,\ldots, n-3$, and
  $(v_1, \cdots, v_{n-3}, Y_p, w_t, X_p)$ is a positively oriented
  basis of $T_p M$. Note that
  $\norm{T f_{-t}(w_t)} \geq c^{-1} e^{\nu t}$. Since $\Phi$ leaves
  $\Omega$ invariant, we have:
  \begin{align*}
    \norm{T_p f_{-t} \left( v_1 \wedge \cdots \wedge v_{n-3} 
        \wedge Y_p \wedge w_t \wedge X_p \right)}_g 
    & = (f_{-t}^\ast \Omega)(v_1, \ldots, v_{n-3}, Y_p, w_t, X_p) \\
    & = \Omega(v_1, \ldots, v_{n-3}, Y_p, w_t, X_p) \\
    & = \norm{v_1 \wedge \cdots \wedge v_{n-3} \wedge Y_p \wedge w_t
     \wedge X_p}_g \\
   & \leq \norm{Y}_\infty,
  \end{align*}
  where $\norm{Y}_\infty = \max \{ \norm{Y_x} : x \in M \}$; here we
  used $g(X_p,X_p) = 1$, for all $p \in M$. Our choice of $w_t$
  implies
  \begin{align*}
    \norm{T_p f_{-t} \left( v_1 \wedge \cdots \wedge v_{n-3} 
            \wedge Y_p \wedge w_t \wedge X_p \right)}_g
     & = \norm{T_p f_{-t} \left( v_1 \wedge \cdots \wedge v_{n-3} 
            \wedge Y_p \wedge w_t \right) \wedge X_{f_{-t}p}}_g \\
     & = \norm{T_p f_{-t} \left( v_1 \wedge \cdots \wedge v_{n-3} 
           \wedge w_t \wedge Y_p \right)}_g \norm{X_{f_{-t}(p)}}  \\
     & =  \norm{T_p f_{-t} \left( v_1 \wedge 
         \cdots \wedge v_{n-3} \wedge Y_p \right)}_g \norm{T_p
       f_{-t}(w_t)} \\
    & \geq c^{-1} e^{\nu t} \norm{T_p f_{-t} \left( v_1 \wedge 
         \cdots \wedge v_{n-3} \wedge Y_p \right)}_g,
  \end{align*}
  since $T_p f_{-t}(w_t)$ is orthogonal to the subspace containing the
  parallelepiped
  $T_p f_{-t} \left( v_1 \wedge \cdots \wedge v_{n-3} \wedge Y_p
  \right)$. Combining the last two inequalities, we obtain for all
  $t \geq 0$:
  \begin{displaymath}
    \norm{T_p f_{-t} \left( v_1 \wedge \cdots \wedge v_{n-3}
    \wedge Y_p \right)}_g \leq c \norm{Y}_\infty e^{-\nu t}.
  \end{displaymath}
  If $g$ is not Anosov, then the Finsler norms defined by $g$ and any
  fixed Anosov metric $g_0$ are equivalent, yielding an analogous
  inequality.
\end{proof}

Therefore, we have:

\begin{cor}
  Volume-preserving codimension one Anosov flows in dimensions $n \geq
  4$
  are asymmetric.
\end{cor}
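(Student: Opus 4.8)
The plan is to read off the corollary directly from Proposition~\ref{prop:exp-contr} together with the definition of asymmetry; the only point that needs care is reconciling the two normalizations, since Proposition~\ref{prop:exp-contr} is phrased for unit vectors whereas asymmetry asks for the volume-contraction factor \emph{relative to} $\vol(\Pi)$.

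First I would put the parallelepiped into a normal form. Since the flow is of codimension one with $\dim E^{uu}=1$, fix a non-vanishing section $Y$ of $E^{uu}$ as in Proposition~\ref{prop:exp-contr} (this exists by the standing orientability assumption). Then any side of an $(n-2)$-dimensional parallelepiped $\Pi\subset T_xM$ that lies in $E^{uu}_x$ is a scalar multiple $sY_x$ of $Y_x$. If $s=0$ then $\vol(\Pi)=0$ and there is nothing to prove, so assume $s\neq 0$; by multilinearity of $T_xf_{-t}$ and of the volume functional we may divide that side by $s$ and assume it equals $Y_x$ itself. Thus $\Pi$ is spanned by vectors $u_1,\dots,u_{n-3},Y_x$, which we may take to be linearly independent (otherwise $\vol(\Pi)=0$); they span an $(n-2)$-plane $P\subset T_xM$ containing $Y_x$.

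The key observation is that the ratio $\vol(T_xf_{-t}\Pi)/\vol(\Pi)$ depends only on the plane $P=\operatorname{span}\Pi$ and not on the chosen parallelepiped: two parallelepipeds spanning $P$ are related by a linear automorphism $B$ of $P$, which rescales both $\vol(\Pi)$ and $\vol(T_xf_{-t}\Pi)$ by $\abs{\det B}$. Hence I may replace $u_1,\dots,u_{n-3}$ by any orthonormal basis $e_1,\dots,e_{n-3}$ of the $g$-orthogonal complement of $\R Y_x$ inside $P$. These are linearly independent unit vectors, and since $e_1,\dots,e_{n-3},Y_x$ are mutually $g$-orthogonal with the $e_i$ of unit length,
\[
  \frac{\vol(T_xf_{-t}\Pi)}{\vol(\Pi)}
  \;=\;\frac{\norm{T_xf_{-t}\big(e_1\wedge\cdots\wedge e_{n-3}\wedge Y_x\big)}_g}{\norm{Y_x}_g}\,.
\]
Proposition~\ref{prop:exp-contr} bounds the numerator by $Ce^{-\nu t}$, and since $Y$ is continuous and non-vanishing on the compact manifold $M$ we have $\norm{Y_x}_g\geq m:=\min_{x\in M}\norm{Y_x}_g>0$. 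Therefore $\vol(T_xf_{-t}\Pi)\leq (C/m)\,e^{-\nu t}\,\vol(\Pi)$ for all $t\geq 0$ and all such $\Pi$, which is exactly the definition of asymmetry, with $\lambda=\nu$.

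I do not expect a genuine obstacle: all the dynamical content sits in Proposition~\ref{prop:exp-contr}, and what remains is the elementary bookkeeping above, whose one nontrivial ingredient is the observation that the volume-contraction ratio is an invariant of the spanned subspace — this is what lets one pass to an orthonormal frame and quote Proposition~\ref{prop:exp-contr} verbatim. As a sanity check, the same computation with fewer vectors $e_i$ automatically covers lower-dimensional parallelepipeds having a side in $E^{uu}$, consistent with the remark following the definition of asymmetry.
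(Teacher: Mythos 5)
Your argument is correct and follows exactly the route the paper intends: the paper states this corollary with no written proof, treating it as immediate from Proposition~\ref{prop:exp-contr}, and your normalization step (rescaling the $E^{uu}$ side to $Y_x$ and replacing the remaining sides by an orthonormal frame, using that the volume-contraction ratio depends only on the spanned plane) is precisely the bookkeeping the paper leaves implicit. No gaps.
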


\paragraph{The $L^2$-structure on the space of differential forms.}

For $0 \leq r \leq \infty$ and $0 \leq k \leq n$, $C^r \Lambda^k(M)$
will denote the space of $C^r$ exterior differential forms of degree
$k$ on $M$. The space of exact $C^r$ $k$-forms will be denoted by
$C^r B^k(M)$ and the space of closed $C^r$ $k$-forms by $C^r Z^k(M)$.

On any oriented inner product space $V$ with inner product
$\langle \cdot, \cdot \rangle$ and corresponding volume form $\omega$,
one can uniquely define an inner product on exterior forms so that, in
particular, $\langle \xi, \eta \rangle = \langle u, v \rangle$, for
all exterior 1-forms $\xi, \eta$, where $u, v$ are the vectors dual to
$\xi, \eta$, respectively, relative to $\langle \cdot, \cdot
\rangle$. See \cite{lee+smooth+13,warner+83}.

Recall also that for each $0 \leq k \leq n$ (where $n = \dim V$) there
is a unique isomorphism
$\star : \Lambda^k(V^\ast) \to \Lambda^{n-k}(V^\ast)$, called the
\textsf{Hodge-star operator}, between the spaces of exterior $k$- and
$(n-k)$-forms on $V$ such that
\begin{equation}   \label{eq:Hodge-star}
  \xi \wedge \star \eta = \langle \xi, \eta \rangle \: \omega,
\end{equation}
for any $\xi, \eta \in \Lambda^k(V^\ast)$.

The following lemma will be needed later in the paper. The proof is
elementary (it is an exercise in \cite{lee+smooth+13}) and therefore
omitted.

\begin{lem}       \label{lem:star} 
  If $V, \langle \cdot, \cdot \rangle$, and $\omega$ are as above,
  then for every $v \in V$, we have
  \begin{displaymath}
    \star (i_v \omega) = (-1)^{n-1} \theta_v,
  \end{displaymath}
  where $\theta_v = \langle v, \cdot \rangle$ is the exterior 1-form
  dual to $v$ relative to $\langle \cdot, \cdot \rangle$.
  \end{lem}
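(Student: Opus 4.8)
The plan is to reduce the identity to a direct computation in a positively oriented orthonormal basis, exploiting that both sides are linear in $v$. Fix such a basis $e_1, \dots, e_n$ of $V$ with dual basis $e^1, \dots, e^n$, so that $\omega = e^1 \wedge \cdots \wedge e^n$ and $\theta_{e_j} = e^j$ for each $j$. Since the maps $v \mapsto i_v \omega$, $\star$, and $v \mapsto \theta_v$ are all linear, it suffices to verify the asserted equality for $v = e_j$, and then extend by linearity.

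First I would compute the contraction. Using the antiderivation property of $i_{e_j}$ together with $i_{e_j} e^k = \delta^k_j$, only one term survives and one obtains
\[
  i_{e_j}\omega = (-1)^{j-1}\, e^1 \wedge \cdots \wedge \widehat{e^j} \wedge \cdots \wedge e^n,
\]
where the hat denotes omission. Next I would apply the Hodge star to this $(n-1)$-form. On an oriented orthonormal basis, the defining relation \eqref{eq:Hodge-star} yields $\star(e^{i_1} \wedge \cdots \wedge e^{i_{n-1}}) = \operatorname{sgn}(\sigma)\, e^{i_n}$, where $\sigma = (i_1, \dots, i_{n-1}, i_n)$ is the corresponding permutation of $(1, \dots, n)$. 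For the complementary index $\{j\}$, the relevant permutation moves $j$ past the $n-j$ entries $j+1, \dots, n$, so its sign is $(-1)^{n-j}$, giving
\[
  \star\!\left(e^1 \wedge \cdots \wedge \widehat{e^j} \wedge \cdots \wedge e^n\right) = (-1)^{n-j}\, e^j.
\]

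Combining the two displays,
\[
  \star(i_{e_j}\omega) = (-1)^{j-1}(-1)^{n-j}\, e^j = (-1)^{n-1}\, e^j = (-1)^{n-1}\,\theta_{e_j},
\]
and linearity in $v$ completes the argument. The only point requiring care is the bookkeeping of signs; there is no genuine obstacle, consistent with this being an elementary exercise. Alternatively, one can argue invariantly: first check the auxiliary identity $i_v\omega = \star\,\theta_v$ (again a one-line orthonormal-basis computation, since $\star e^j = (-1)^{j-1} e^1 \wedge \cdots \wedge \widehat{e^j} \wedge \cdots \wedge e^n = i_{e_j}\omega$), and then apply $\star$ together with the standard rule $\star\star = (-1)^{k(n-k)}$ on $k$-forms, which for $k=1$ gives $\star(i_v\omega) = \star\star\,\theta_v = (-1)^{n-1}\,\theta_v$ at once.
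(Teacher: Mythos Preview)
Your argument is correct; the sign bookkeeping in both the direct computation and the alternative via $i_v\omega=\star\,\theta_v$ followed by $\star\star=(-1)^{n-1}$ on $1$-forms checks out. The paper itself omits the proof entirely, calling it an elementary exercise, so your write-up in fact supplies what the paper leaves to the reader and there is nothing further to compare.
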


  If $g$ is a $C^r$ ($0 \leq r \leq \infty$) Riemannian metric on $M$
  with $\vol(g) = \Omega$, we will denote by $\star_g$ the associated
  Hodge-star operator \cite{lee+smooth+13,warner+83}, defined
  pointwise as in \eqref{eq:Hodge-star}.  The inner product on
  $C^0 \Lambda^k(M)$ (for each $0 \leq k \leq n$) induced by $g$
  is defined by
  \begin{displaymath}
    \langle \xi,\eta \rangle_g = \int_M \xi \wedge \star_g \eta.
  \end{displaymath}
  The metric $g$ defines a Finsler structure on $M$, which we denote
  by $\abs{\cdot}_g = \langle \cdot, \cdot \rangle_g^{1/2}$. For any
  form $\omega \in C^0 \Lambda^k(M)$, with $1 \leq k \leq n$ and
  $p \in M$, we will denote by $\abs{\omega_p}_g$ the operator norm of
  $\omega_p : (T_x M)^k \to \R$ as a $k$-linear map relative to this
  Finsler structure:
\begin{displaymath}
  \abs{\omega_p}_g = \max \{ \abs{\omega_p(u_1,\ldots,u_k)}_g : u_j \in T_x M,
  \abs{u_j}_g = 1 \}.
\end{displaymath}
The $C^0$-norm of $\omega$ is defined by
\begin{displaymath}
  \norm{\omega}_\infty = \sup \{ \abs{\omega_p}_g : p \in M \}.
\end{displaymath}
This is to be distinguished from the $L^2$-norm
$\norm{\omega}_g = \langle \omega, \omega \rangle_g^{1/2}$. The
completion of $C^0 \Lambda^k(M)$ relative to this norm is the space
$L^2 \Lambda^k(M)$.

In an analogous way we can define an inner product on the space of
continuous vector fields on $M$ by setting
\begin{displaymath}
  \langle Y, Z \rangle_g = \int_M g(Y,Z) \Omega.
\end{displaymath}
The corresponding $L^2$-norm is denoted by $\norm{Z}_g = \langle
Z, Z \rangle_g^{1/2}$.

As a direct consequence of Lemma~\ref{lem:star}, we have:

    \begin{cor}  \label{cor:dual_form}
    If $\Omega$ is a volume form on $M$, $Z$ a non-vanishing vector
    field, and $g$ a Riemannian metric with $\vol(g) = \Omega$, then 
    \begin{displaymath}
      \star_g (i_Z \Omega) = (-1)^{n-1} \theta_Z,
    \end{displaymath}
    where $\theta_Z = g(Z,\cdot)$. If $X, \Omega$, and $\alpha$ are
    defined as before, and $g$ is an Anosov metric for the flow, then
    \begin{displaymath}
      \star_g (i_X \Omega) = (-1)^{n-1} \alpha.
    \end{displaymath}
  \end{cor}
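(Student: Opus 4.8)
The plan is to obtain both identities as immediate fiberwise consequences of Lemma~\ref{lem:star}. For the first identity, I would fix an arbitrary point $p \in M$ and apply Lemma~\ref{lem:star} with $V = T_p M$, inner product $g_p$, and volume form $\Omega_p$ — this is legitimate precisely because $\vol(g) = \Omega$, so $\Omega_p$ is the $g_p$-volume form of the oriented space $T_pM$ — taking $v = Z_p$. Since $\star_g$, the contraction $i_Z$, and the dual form $\theta_Z = g(Z,\cdot)$ are all defined pointwise, the lemma yields
\begin{displaymath}
  \bigl(\star_g(i_Z\Omega)\bigr)_p \;=\; \star_{g_p}\bigl(i_{Z_p}\Omega_p\bigr) \;=\; (-1)^{n-1}\,\theta_{Z_p} \;=\; (-1)^{n-1}(\theta_Z)_p .
\end{displaymath}
Letting $p$ vary over $M$ gives the global identity $\star_g(i_Z\Omega) = (-1)^{n-1}\theta_Z$.

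For the second identity I would specialize $Z = X$ and let $g$ be an Anosov metric for $\Phi$, which we may take to satisfy $\vol(g) = \Omega$ (so that the first part applies). Then $\star_g(i_X\Omega) = (-1)^{n-1}\theta_X$ with $\theta_X = g(X,\cdot)$, and it remains only to check that $\theta_X = \alpha$. Both are continuous $1$-forms, so it suffices to compare them on the splitting $TM = \R X \oplus E^{su}$. On the center line, $\theta_X(X) = g(X,X) = 1 = \alpha(X)$ by the definition of an Anosov metric and of $\alpha$; on $E^{su}$, the Anosov condition that $X$ is $g$-orthogonal to $E^{su}$ gives $\theta_X|_{E^{su}} = 0$, which agrees with $\alpha|_{E^{su}} = 0$ since $\ker\alpha = E^{su}$. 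Hence $\theta_X = \alpha$, and therefore $\star_g(i_X\Omega) = (-1)^{n-1}\alpha$.

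I do not expect any substantial obstacle: the corollary is essentially bookkeeping once Lemma~\ref{lem:star} is available. The only points deserving care are the passage from the fiberwise statement to the global one (handled above by working at an arbitrary point) and the hypothesis $\vol(g) = \Omega$ needed in the second part, which ensures that $\Omega$ is the genuine $g$-volume form so that Lemma~\ref{lem:star} may be applied verbatim; the sign $(-1)^{n-1}$ is then inherited directly from the lemma, consistently with the chosen orientation.
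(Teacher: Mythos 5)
Your proof is correct and matches the paper's intent exactly: the paper states the corollary as a direct consequence of Lemma~\ref{lem:star} with no further argument, and your fiberwise application of the lemma plus the verification that $\theta_X = \alpha$ for an Anosov metric (via $g(X,X)=1$ and $X \perp_g E^{su} = \ker\alpha$) is precisely the intended bookkeeping. Your remark that one should take the Anosov metric to satisfy $\vol(g)=\Omega$ is an appropriate reading of the hypothesis carried over from the first sentence of the corollary.
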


  Consider now the unbounded linear operator
  \begin{displaymath}
    L_X^{(k)} : L^2 \Lambda^k(M) \to L^2 \Lambda^k(M),
  \end{displaymath}
  with dense domain $C^1_X \Lambda^k(M)$. The underlying Riemannian
  metric $g$ (used to define the $L^2$-inner product on the space of
  differential forms) is assumed to be at least of class $C^1_X$
  (i.e., continuous with a continuous $L_X$-derivative); note that
  this includes Anosov metrics. We have:

  \begin{prop}     \label{prop:general-L_X}
    (a) The  adjoint of $L_X^{(k)}$ is
  \begin{displaymath}
    \left( L_X^{(k)} \right)^\ast = (-1)^{k(n-k) + 1} \star_g L_X^{(n-k)} \star_g.
  \end{displaymath}
  (b) $L_X^{(k)}$ is a closed operator and
  $\left( L_X^{(k)} \right)^{\ast \ast} = L_X^{(k)}$,  for all $0 \leq k \leq n$.

    \noindent (c) We have
    \begin{displaymath}
      [\text{\normalfont image}(L_X^{(k)})]^{\perp_g} =
      \ker(\star_g L_X^{(n-k)} \star_g) \quad
      \text{and} \quad [\text{\normalfont image}(\star_g L_X^{(n-k)}
      \star_g)]^{\perp_g} = \ker(L_X^{(k)}).
    \end{displaymath}
  \end{prop}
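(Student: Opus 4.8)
The plan is to treat part (a) as the substantive claim and to deduce (b) and (c) from it by standard operator theory.

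For (a) I would first derive the integral identity underlying the adjoint. Fix $\xi,\eta\in C^1_X\Lambda^k(M)$. Since $g$ — hence the Hodge star $\star_g$ — is of class $C^1_X$, the product $\xi\wedge\star_g\eta=\langle\xi,\eta\rangle_g\,\Omega$ is a $C^1_X$ $n$-form, so its Lie derivative integrates to zero: writing an arbitrary $C^1_X$ $n$-form as $h\,\Omega$ with $h\in C^1_X(M)$, and using that $f_t$ preserves $\Omega$, one has $\int_M L_X(h\,\Omega)=\int_M X(h)\,\Omega=\frac{d}{dt}\big|_{0}\int_M (h\circ f_t)\,\Omega=0$. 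Expanding $0=\int_M L_X(\xi\wedge\star_g\eta)=\int_M (L_X\xi)\wedge\star_g\eta+\int_M\xi\wedge L_X(\star_g\eta)$ by the Leibniz rule, then using $\star_g\star_g=(-1)^{k(n-k)}$ on $\Lambda^k$ to write $L_X(\star_g\eta)=(-1)^{k(n-k)}\star_g\bigl(\star_g L_X^{(n-k)}\star_g\,\eta\bigr)$ so that $\xi\wedge L_X(\star_g\eta)=(-1)^{k(n-k)}\langle\xi,\star_g L_X^{(n-k)}\star_g\,\eta\rangle_g\,\Omega$, I obtain
\[
\langle L_X^{(k)}\xi,\eta\rangle_g=(-1)^{k(n-k)+1}\,\langle\xi,\star_g L_X^{(n-k)}\star_g\,\eta\rangle_g
\]
for all $\xi,\eta\in C^1_X\Lambda^k(M)$. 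This shows that the operator $(-1)^{k(n-k)+1}\star_g L_X^{(n-k)}\star_g$, with its natural domain $C^1_X\Lambda^k(M)$, is contained in $(L_X^{(k)})^\ast$. The hard part — the only genuinely technical step — is to upgrade this inclusion to an equality by showing $\mathrm{dom}\bigl((L_X^{(k)})^\ast\bigr)$ lies in the $L^2$-graph closure of $C^1_X\Lambda^k(M)$. I would do this by Friedrichs mollification: given $\eta\in\mathrm{dom}\bigl((L_X^{(k)})^\ast\bigr)$ with $(L_X^{(k)})^\ast\eta=\zeta$, mollify $\eta$ both along the flow (convolving $s\mapsto f_s^\ast\eta$ against a bump $\phi_\varepsilon$) and transversally, obtaining smooth $\eta_\varepsilon\to\eta$ in $L^2$; since $X$ is smooth, the commutators of $L_X$ with the mollifiers are uniformly $L^2$-bounded and tend to zero strongly (Friedrichs' lemma), while $\star_g$ is a zero-order multiplication operator whose commutator with the mollifiers is controlled by the modulus of continuity of $g$. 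Feeding the displayed identity into this scheme shows $\star_g L_X^{(n-k)}\star_g\,\eta_\varepsilon$ converges in $L^2$, so $\eta$ lies in the graph closure and $\zeta=(-1)^{k(n-k)+1}\star_g L_X^{(n-k)}\star_g\,\eta$; a reader willing to cite the standard regularization lemmas for Lie derivatives along flows can compress this considerably.

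Parts (b) and (c) are then formal. For (b): by (a) the adjoint $(L_X^{(k)})^\ast=(-1)^{k(n-k)+1}\star_g L_X^{(n-k)}\star_g$ is densely defined (its domain contains $C^1_X\Lambda^k(M)$), so $L_X^{(k)}$ is closable and $(L_X^{(k)})^{\ast\ast}=\overline{L_X^{(k)}}$, which is automatically closed; to identify this double adjoint with $L_X^{(k)}$, apply (a) in complementary degree $n-k$ and use that $\star_g:L^2\Lambda^j(M)\to L^2\Lambda^{n-j}(M)$ is unitary (pointwise, $\langle\star_g\xi,\star_g\eta\rangle_g=\langle\xi,\eta\rangle_g$, since $\vol(g)=\Omega$), so that $(\star_g L_X^{(n-k)}\star_g)^\ast=\star_g(L_X^{(n-k)})^\ast\star_g$; substituting the degree-$(n-k)$ formula for $(L_X^{(n-k)})^\ast$ and collecting the signs (the overall factor $(-1)^{2(k(n-k)+1)}=1$, and each Hodge pair contributing $\star_g\star_g=(-1)^{k(n-k)}$) leaves exactly $L_X^{(k)}$. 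For (c): for any densely defined $T$ one has $[\mathrm{image}(T)]^{\perp}=\ker(T^\ast)$; with $T=L_X^{(k)}$ and (a) this gives $[\mathrm{image}(L_X^{(k)})]^{\perp_g}=\ker\bigl((L_X^{(k)})^\ast\bigr)=\ker(\star_g L_X^{(n-k)}\star_g)$ (a nonzero scalar factor does not affect the kernel), and with $T=\star_g L_X^{(n-k)}\star_g$ — which by (a) is a scalar multiple of $(L_X^{(k)})^\ast$, hence has adjoint a scalar multiple of $(L_X^{(k)})^{\ast\ast}=L_X^{(k)}$ by (b) — this gives $[\mathrm{image}(\star_g L_X^{(n-k)}\star_g)]^{\perp_g}=\ker(L_X^{(k)})$.
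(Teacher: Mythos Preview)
Your argument is correct and follows the same route as the paper: the integration-by-parts identity in (a) is obtained exactly as you do (vanishing of $\int_M L_X(\xi\wedge\star_g\eta)$, Leibniz, and $\star_g\star_g=(-1)^{k(n-k)}$), and (c) is deduced from $[\text{image}(T)]^\perp=\ker(T^\ast)$ in both cases.

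There are two minor differences worth noting. First, you are more careful than the paper about domains in (a): the paper's computation only shows that $(-1)^{k(n-k)+1}\star_g L_X^{(n-k)}\star_g$ agrees with $(L_X^{(k)})^\ast$ on $C^1_X\Lambda^k(M)$ and stops there, whereas you add a Friedrichs mollification argument to identify the full domain of the adjoint. This extra step is not in the paper, and indeed is not strictly needed for the downstream applications (which only use the kernel of the adjoint), but it makes the statement honest as an equality of unbounded operators. Second, for (b) the paper computes $\langle (L_X^{(k)})^\ast\xi,\eta\rangle_g$ directly by another integration by parts to recover $\langle\xi,L_X^{(k)}\eta\rangle_g$, while you instead invoke (a) in the complementary degree together with the unitarity of $\star_g$; both arrive at $(L_X^{(k)})^{\ast\ast}=L_X^{(k)}$ and the closedness then follows since any adjoint is closed.
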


  Here $S^{\perp_g}$ denotes the orthogonal complement of a set $S$
  relative to the $L^2$-inner product defined by the Riemannian metric $g$.
  
  \begin{proof}
    (a) Let $\xi, \eta \in C^1_X \Lambda^k(M)$. Then:
    \begin{align*}
      \langle L_X^{(k)} \xi, \eta \rangle_g & = \int_M L_X^{(k)} \xi
                                              \wedge \star_g \eta \\
                                            & = - \int_M \xi \wedge L_X^{(n-k)} \star_g \eta \\
      & = (-1)^{k(n-k)+1} \int_M \xi \wedge \star_g [\star_g
        L_X^{(n-k)} \star_g \eta] \\
      & = (-1)^{k(n-k)+1} \langle \xi, \star_g L_X^{(n-k)} \star_g
        \eta \rangle_g,
    \end{align*}
    which proves (a). We used the fact that on $k$-forms, $\star_g
    \star_g = (-1)^{k(n-k)} \text{id}$. \\
    
    \noindent (b) Recall (see, e.g, \cite{conway_fa+2007}) that a
    densely defined unbounded operator is closable if its adjoint is
    densely defined. Since $\star_g$ maps $C^1_X$-forms to
    $C^1_X$-forms, the domain of $\star_g L_X^{(n-k)} \star_g$ is
    $C^1_X \Lambda^k(M)$, which is dense in $L^2 \Lambda^k(M)$, so
    $L_X^{(k)}$ is closable. To compute its second adjoint, we have:
    \begin{align*}
      \langle \left( L_X^{(k)}\right)^\ast \xi, \eta \rangle_g & = \langle
                                                            \eta,
                                                            \left(
                                                            L_X^{(k)}\right)^\ast
                                                            \xi
                                                                 \rangle_g \\
      & = \int_M \eta \wedge \star_g \left( L_X^{(k)}\right)^\ast \xi \\
                                                          & = - \int_M \eta \wedge L_X^{(n-k)} \star_g \xi \\
                                                          & = \int_M L_X^{(k)} \eta \wedge \star_g \xi \\
                                                          & = \langle L_X^{(k)} \eta, \xi \rangle_g \\
      & = \langle \xi, L_X^{(k)} \eta \rangle_g.
    \end{align*}
    Thus $\left( L_X^{(k)} \right)^{\ast \ast} = L_X^{(k)}$, so
    $L_X^{(k)}$ is in fact closed, being the adjoint of another
    operator (see \cite{conway_fa+2007}).

    \medskip
    
    (c) A direct consequence of the general theory of unbounded linear
    operators (see \cite{conway_fa+2007}, Proposition X.1.13) and (a).
  \end{proof}

\paragraph{The Gol'dshtein-Troyanov complex}
To make the paper as self-contained as possible, we briefly review a
result from \cite{goldshtein+troyanov+06} we will need later. In
\cite{goldshtein+troyanov+06}, Gol'dshtein and Troyanov define the
following spaces:
\begin{displaymath}
  \Omega^k_{p,q}(M) = \{ \omega \in L^q \Lambda^k(M) : d\omega \in L^p
  \Lambda^{k+1}(M) \},
\end{displaymath}
where $(M,g)$ is a Riemannian manifold (which we assume to be
compact), $1 \leq p, q \leq \infty$, and $d$ denotes the \emph{weak} exterior
differential. For each $p$ and $q$, this is a Banach space with the
graph norm
\begin{displaymath}
  \norm{\omega}_{\Omega_{p,q}} = \norm{\omega}_{L^q} + \norm{d\omega}_{L^p}.
\end{displaymath}
The spaces $\Omega^k_{p,q}(M)$ are used to define the so called
$L_{p,q}$-cohomology of $M$, which we do not need here. We will
however use some parts of the following result (Theorem 12.5 in
\cite{goldshtein+troyanov+06}):

\begin{thm}[The regularization and homotopy operators]    \label{thm:goldshtein+troyanov}
  There exists a family of regularization operators $R_\veps$ and
  homotopy operators $A_\veps$ (with $\veps > 0$) satisfying the following properties:
  
  \begin{enumerate}
  \item[(a)] For every $\omega \in L^1 \Lambda^k(M)$, the form
    $R_\veps \: \omega$ is smooth.
  \item[(b)] For any $\omega \in \Omega^k_{q,p}(M)$, we have $d R_\veps
    \: \omega = R_\veps \: d\omega$.
  \item[(c)] For any $1 \leq p, q < \infty$ and $\veps > 0$, $R_\veps
    : \Omega^k_{q,p}(M) \to \Omega^k_{q,p}(M)$ is a bounded linear operator
    such that $\norm{R_\veps}_{q,p} \to 1$, as $\veps \to 0$.
  \item[(d)] For any $1 \leq p, q < \infty$ and $\omega \in
    \Omega^k_{q,p}(M)$, we have $\norm{R_\veps \omega - \omega}_p \to
    0$, as $\veps \to 0$. Thus smooth forms are dense in
    $\Omega^k_{q,p}(M)$ (if $p, q$ are finite).
  \item[(e)] The homotopy operator $A_\veps : \Omega^k_{p,r}(M) \to
    \Omega^{k-1}_{q,p}(M)$ (where $1 \leq k \leq n$) is bounded in the
    following cases:
    \begin{enumerate}
    \item[(i)] If $1 \leq p, q, r \leq \infty$ satisfy $\frac{1}{p} -
      \frac{1}{q} < \frac{1}{n}$ and $\frac{1}{r} - \frac{1}{p} <
      \frac{1}{n}$;

    \item[(ii)] If $1 < p, q, r \leq \infty$ satisfy $\frac{1}{p} - \frac{1}{q}
      \leq \frac{1}{n}$ and $\frac{1}{r} - \frac{1}{p} \leq \frac{1}{n}$.
    \end{enumerate}

  \item[(f)] The following homotopy formula holds:
    \begin{displaymath}
      \omega - R_\veps \: \omega = d A_\veps \: \omega + A_\veps\: d \omega.
    \end{displaymath}
  \end{enumerate}
\end{thm}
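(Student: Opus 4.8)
The plan is to prove this by the quantitative version of de Rham's regularization scheme in the $L^{p,q}$-scale: one constructs the operators first on $\R^n$, where both families come from a single averaging procedure, and then transports them to $M$ via a finite atlas. Over $\R^n$, fix a mollifier $\rho \in C^\infty$ with compact support and $\int \rho = 1$, set $\rho_\veps(z) = \veps^{-n}\rho(z/\veps)$, and let $\tau_z(x) = x+z$ denote translation. Averaging pullbacks against $\rho_\veps$ gives the regularizer
\begin{displaymath}
  R_\veps\, \omega = \int_{\R^n} \rho_\veps(z)\, \tau_z^\ast \omega \; dz,
\end{displaymath}
which is nothing but componentwise convolution with $\rho_\veps$. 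This is smooth, giving (a); it commutes with $d$ because each $\tau_z^\ast$ does and differentiation passes under the integral, giving (b); and it is an approximate identity, so for $1 \le p,q < \infty$ one gets $\norm{R_\veps \omega - \omega}_p \to 0$ and $\norm{R_\veps}_{q,p} \to 1$ by Young's inequality together with $\norm{\rho_\veps}_{1} = 1$, giving (c) and (d).

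For the homotopy I would average the Cartan homotopy for the linear isotopy $s \mapsto \tau_{sz}$. Writing $i_z$ for interior multiplication by the constant field $z$, the pointwise identity $\tau_z^\ast\omega - \omega = d\, h_z\omega + h_z\, d\omega$ holds with $h_z\omega = \int_0^1 \tau_{sz}^\ast (i_z\omega)\, ds$. Setting
\begin{displaymath}
  A_\veps\, \omega = -\int_{\R^n}\rho_\veps(z)\, h_z\omega \; dz
\end{displaymath}
and integrating this identity against $\rho_\veps$ produces exactly the homotopy formula (f), namely $\omega - R_\veps\omega = dA_\veps\omega + A_\veps d\omega$. The substitution $y = x + sz$ rewrites $A_\veps$ as an integral operator whose Schwartz kernel, after integrating out the dilation parameter $s$, is dominated near the diagonal by $\abs{x-y}^{-(n-1)}$ and is supported in $\abs{x-y} \le \veps$; thus $A_\veps$ behaves like the Riesz potential of order one, uniformly in $\veps$, which is the analytic reason it gains one degree of integrability.

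The boundedness claims in (e) are then the two standard regimes for such a kernel, and the two exponent inequalities are precisely the requirements that the constituent order-one mappings (one governed by the gap $\frac1p - \frac1q$, the other by $\frac1r - \frac1p$) be bounded. For the strict inequalities in (i) the truncated kernel lies in $L^a_{\mathrm{loc}}$ with $a$ satisfying $\frac1a = 1 - (\tfrac1p - \tfrac1q) > 1 - \tfrac1n$, so Young's inequality applies and even covers the endpoints in $\{1,\infty\}$; for the non-strict inequalities in (ii) one sits exactly at the potential endpoint $\frac1p - \frac1q = \frac1n$, where Young fails and the Hardy--Littlewood--Sobolev inequality supplies the bound, but only for $1 < p,q,r < \infty$, which is precisely the restriction stated. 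Because the kernels are uniformly dominated by the fixed order-one kernel on a shrinking support, all these estimates are uniform in $\veps$.

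Finally, to pass from $\R^n$ to the compact manifold $M$ I would use a finite cover by charts with a subordinate partition of unity and glue following de Rham: define elementary regularizations $R^{(i)}_\veps = \mathrm{id} + dA^{(i)}_\veps + A^{(i)}_\veps d$ localized in chart $i$, and take the finite composition $R_\veps = R^{(m)}_\veps \circ \cdots \circ R^{(1)}_\veps$, whose associated homotopy $A_\veps$ is assembled from the $A^{(i)}_\veps$ and the partial compositions by the usual telescoping identity for composed chain homotopies. Since the cover is finite, smoothness and $d$-commutation survive in each chart, the convergence $\norm{R_\veps}_{q,p} \to 1$ is preserved, and the $L^{p,q}$-estimates compose. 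The main obstacle is part (e): one must correctly identify $A_\veps$ as an order-one potential operator, invoke Young in the strict/endpoint-friendly case and Hardy--Littlewood--Sobolev in the non-strict case that forces $p,q,r > 1$, and then keep every estimate uniform in $\veps$ and stable under the chart-gluing.
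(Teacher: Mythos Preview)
The paper does not give its own proof of this theorem: it is quoted verbatim as Theorem~12.5 of Gol'dshtein--Troyanov and used as a black box. So there is no in-paper argument to compare your proposal against.

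That said, your sketch is precisely the route taken in the cited reference: de Rham--style regularization by local mollification on $\R^n$, the Cartan homotopy averaged against the mollifier to produce $A_\veps$, identification of the resulting kernel as a truncated order-one Riesz potential (Young's inequality for the strict-gap case (i), Hardy--Littlewood--Sobolev for the endpoint case (ii), which is exactly why the constraint $p,q,r>1$ appears there), and finally de Rham's telescoping composition across a finite atlas to globalize. Your outline is correct and matches the original source; the only places where real work hides are the uniform-in-$\veps$ control of the kernel and the bookkeeping in the chart composition, both of which you flag.
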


Recall that a continuous 1-form $\omega$ on $M$ is said to be closed
in the \textsf{Stokes sense} if
  \begin{displaymath}
    \int_{\del D} \omega = 0,
  \end{displaymath}
  for every $C^1$-immersed 2-disk $D$ with piecewise $C^1$
  boundary. It is closed in the \textsf{weak sense} if its weak
  differential is zero, i.e.,
  \begin{displaymath}
    \int_M \omega \wedge d\eta = 0,
  \end{displaymath}
  for every smooth $(n-2)$-form $\eta$.

\begin{lem}    \label{lem:regularization}
  A continuous 1-form $\omega$ on $M$ is closed in the weak sense if
  and only it is closed in the Stokes sense.
  
\end{lem}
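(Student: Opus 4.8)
The plan is to prove both implications by relating the two notions of closedness through approximation by smooth forms. For the easy direction, suppose $\omega$ is closed in the Stokes sense. Fix a smooth $(n-2)$-form $\eta$. The idea is to approximate $\eta$ by a sum of products of the form $\phi\,d\psi_1\wedge\cdots$ or, more directly, to use that $\int_M \omega\wedge d\eta$ can be computed by integrating $\omega$ over small $2$-disks: triangulate $M$ finely, write $\int_M \omega\wedge d\eta$ as a sum of local contributions, and on each chart use Fubini together with the hypothesis $\int_{\partial D}\omega = 0$ over each $2$-dimensional slice $D$ (whose boundary integral vanishes by the Stokes-sense assumption) to conclude the integral is zero. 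A cleaner route: since $\omega$ is Stokes-closed, by the Hartman-Frobenius circle of ideas its exterior differential in the Stokes sense exists and equals $0$ (taking $\xi = 0$ in item (12) of the preliminaries), hence it also equals $0$ in the weak sense — but this is essentially what must be shown, so I will instead argue directly via regularization.

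For the substantive direction, suppose $\omega$ is closed in the weak sense, i.e., $d\omega = 0$ as a weak differential. Then $\omega \in \Omega^1_{q,q}(M)$ for every finite $q$ (indeed $\omega$ is continuous, hence in $L^q$, and $d\omega = 0 \in L^q$). Apply Theorem~\ref{thm:goldshtein+troyanov}: the regularized forms $R_\veps\,\omega$ are smooth, and by (b) we have $d R_\veps\,\omega = R_\veps\, d\omega = R_\veps\, 0 = 0$, so each $R_\veps\,\omega$ is a genuinely closed smooth $1$-form. By (d), $\norm{R_\veps\,\omega - \omega}_q \to 0$ as $\veps \to 0$. Now for any $C^1$-immersed $2$-disk $D$ with piecewise $C^1$ boundary, classical Stokes' theorem for smooth forms gives $\int_{\partial D} R_\veps\,\omega = \int_D d R_\veps\,\omega = 0$. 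The remaining point is to pass to the limit: $\int_{\partial D}\omega = \lim_{\veps\to 0}\int_{\partial D} R_\veps\,\omega = 0$. This limit interchange is the main obstacle, since $L^q$-convergence on $M$ does not immediately control boundary integrals over a lower-dimensional set $\partial D$; one needs the stronger fact (which I expect to extract from the construction of $R_\veps$ in \cite{goldshtein+troyanov+06}, or from a standard mollification argument in charts) that $R_\veps\,\omega \to \omega$ uniformly on compact sets, which holds because $\omega$ is continuous and $R_\veps$ is built from convolution with smooth approximate identities. With uniform convergence, $\int_{\partial D} R_\veps\,\omega \to \int_{\partial D}\omega$ for the fixed piecewise-$C^1$ curve $\partial D$, completing this direction.

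For the converse direction stated above (Stokes $\Rightarrow$ weak), I would run the same machinery in reverse. Assume $\int_{\partial D}\omega = 0$ for all admissible $D$. Again $\omega$ is continuous, so $\omega \in \Omega^1_{q,q}(M)$ provided I first check its weak differential is defined and in $L^q$; to avoid circularity, instead argue as follows. For a smooth $(n-2)$-form $\eta$, the homotopy formula (f) applied to $\omega$ reads $\omega - R_\veps\,\omega = d A_\veps\,\omega + A_\veps\, d\omega$. Here $d\omega$ (weak) is not yet known to vanish, but I can instead test against $\eta$ and integrate by parts using only smooth forms: $\int_M (R_\veps\,\omega)\wedge d\eta = -\int_M d(R_\veps\,\omega)\wedge \eta \pm \int_M d\big((R_\veps\,\omega)\wedge\eta\big)$, and the last term vanishes by Stokes on the closed manifold $M$; it remains to show $d(R_\veps\,\omega)\to 0$. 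Since $R_\veps\,\omega$ is smooth, $\int_{\partial D} R_\veps\,\omega = \int_D d(R_\veps\,\omega)$ for every small $2$-disk $D$; as $R_\veps\,\omega \to \omega$ uniformly and the Stokes-sense hypothesis gives control of $\int_{\partial D}\omega$, one deduces that $d(R_\veps\,\omega) \to 0$ uniformly (a vector of continuous functions whose integrals over all small $2$-disks tend to zero must tend to zero), and then $\int_M (R_\veps\,\omega)\wedge d\eta \to 0$ while also $\int_M (R_\veps\,\omega)\wedge d\eta \to \int_M \omega\wedge d\eta$ by uniform convergence, forcing $\int_M \omega\wedge d\eta = 0$ — i.e., $\omega$ is weakly closed. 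In both directions the crux is the same: upgrading the Gol'dshtein–Troyanov $L^q$-regularization to uniform convergence for continuous $\omega$, which lets us move freely between the codimension-one (weak) and dimension-two (Stokes) characterizations.
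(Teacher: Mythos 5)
Your direction (weak $\Rightarrow$ Stokes) follows the same regularization idea as the paper but rests on a property not contained in the quoted Theorem~\ref{thm:goldshtein+troyanov}: uniform convergence $R_\veps\,\omega\to\omega$ for continuous $\omega$. You flag this yourself; it is indeed true for de Rham--type convolution regularizations, but the paper avoids needing it altogether. Since $d\omega=0$ weakly, the homotopy formula (f) gives the \emph{exact} identity $\omega - R_\veps\,\omega = d(A_\veps\,\omega)$ for each fixed $\veps$; by (e) the function $u_\veps = A_\veps\,\omega$ is Lipschitz, and since its a.e.\ differential coincides with the continuous form $\omega - R_\veps\,\omega$ it may be taken $C^1$. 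Then $\int_{\del D}\omega = \int_{\del D}R_\veps\,\omega + \int_{\del D}du_\veps = 0 + 0$ (classical Stokes for the smooth closed form $R_\veps\,\omega$, and the fundamental theorem of calculus for $du_\veps$ over the closed curve $\del D$), with no limit to interchange. You should adopt that route, or else actually prove the uniform convergence from the construction of $R_\veps$.

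The other direction (Stokes $\Rightarrow$ weak) contains a genuine error. From $\int_{\del D}R_\veps\,\omega = \int_D d(R_\veps\,\omega)$ and $\int_{\del D}\omega = 0$ you conclude that $d(R_\veps\,\omega)\to 0$ uniformly, on the grounds that ``a vector of continuous functions whose integrals over all small $2$-disks tend to zero must tend to zero.'' That principle is false: a sequence of $2$-forms $\beta_\veps$ can satisfy $\int_D\beta_\veps\to 0$ for every fixed $D$ while $\|\beta_\veps\|_\infty$ stays bounded away from zero (rapid oscillation) or even blows up (the derivatives of mollifications of a continuous, non-differentiable form typically do). Shrinking $D$ to a point does not rescue the argument either: even granting $R_\veps\,\omega\to\omega$ uniformly, one only gets $|\int_D d(R_\veps\,\omega)| \le \mathrm{length}(\del D)\,\|R_\veps\,\omega-\omega\|_\infty$, and $\mathrm{length}(\del D)/\mathrm{area}(D)\to\infty$ as $D$ shrinks, so no pointwise bound on $d(R_\veps\,\omega)$ follows. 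The paper's proof of this direction is entirely different and elementary: Stokes-closedness makes line integrals of $\omega$ path-independent on small simply connected sets, producing local $C^1$ primitives $g_i$ with $dg_i=\omega$; a partition of unity $\eta=\sum_i\eta_i$ then gives $\int_M\omega\wedge d\eta = \sum_i\int_{U_i}dg_i\wedge d\eta_i = 0$ by the classical Stokes theorem on each compactly supported piece. Replace your third paragraph with an argument of this type.
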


\begin{proof}
  $(\Rightarrow)$ Assume $d\omega = 0$ in the weak sense. Fix
  $\veps > 0$. Since $\omega$ is continuous and weakly closed, it
  follows that $\omega \in \Omega^1_{\infty,\infty}(M)$. By
  Theorem~\ref{thm:goldshtein+troyanov} (f) we have:
  \begin{equation}   \label{eq:homotopy}
    \omega - R_\veps \: \omega = d A_\veps \: \omega.
  \end{equation}
  Furthermore, by Theorem~\ref{thm:goldshtein+troyanov} (e), it
  follows that
  $u_\veps := A_\veps \: \omega \in \Omega^0_{\infty,\infty}(M)$,
  i.e., $u_\veps$ is Lipschitz. Thus $du_\veps$ exists a.e. in the
  Fr\'echet sense (and a.e. equals the weak differential of
  $u_\veps$). Moreover, by \eqref{eq:homotopy}
  $du_\veps = \omega - R_\veps \: \omega$, so $du_\veps$ coincides
  a.e. with a continuous 1-form. Thus $u_\veps$ can be chosen to be
  $C^1$. If $D$ is a $C^1$-immersed 2-disk with piecewise $C^1$
  boundary, then:
  \begin{displaymath}
    \int_{\del D} \omega = \int_{\del D} ( R_\veps \: \omega +
    du_\veps ) = 0,
  \end{displaymath}
  since $d R_\veps \: \omega = 0$. Therefore, $\omega$ is closed in
  the Stokes sense. \\

  \noindent $(\Leftarrow)$ Assume now $d\omega = 0$ in the Stokes
  sense. If $U$ is a sufficiently small simply connected set in $M$,
  then on $U$ we have $\omega = dg$, for some $C^1$ function
  $g : U \to \R$. Let $d\eta$ be an arbitrary smooth exact
  $(n-1)$-form. Let $\{ (U_i, \psi_i) \}$ be a smooth partition of
  unity on $M$, where $U_i$ is a sufficiently small disk such that
  there exists a $C^1$-function $g_i$ with $\omega = dg_i$ on
  $U_i$. Then $\eta = \sum_i \eta_i$, where $\eta_i = \psi_i \eta$ is
  supported in $U_i$. It follows that
  \begin{align*}
    \int_M \omega \wedge d\eta & = \sum_i \int_{U_i} \omega \wedge
                                 d\eta_i \\
                               & = \sum_i \int_{U_i} dg_i \wedge d\eta_i \\
                               & = -\sum_i \int_{U_i} d (dg_i \wedge \eta_i) \\
                               & = - \sum_i \int_{\del U_i} dg_i \wedge \eta_i \\
    & = 0,
  \end{align*}
  since $\eta_i = 0$ on $\del U_i$. Therefore, $\omega$ is closed in
  the weak sense.
\end{proof}

\section{A Liv\v{s}ic theorem on the space of differential forms}
\label{sec:livshitz}

In its basic form, the classical Liv\v{s}ic equation over an Anosov
flow is an equation of the form $X\varphi = f$, where $f$ and
$\varphi$ are real-valued functions on $M$. (An analogous
cohomological equation has also been studied over Anosov
diffeomorphisms, partially hyperbolic diffeomorphisms, and other types
of dynamical systems.) In the category of H\"older continuous
functions, the original proof of the existence of solutions was
established in the seminal work of Liv\v{s}ic~\cite{livsic+71,
  livsic+72}. In the smooth case the result was proved by de la Llave,
Marco, and Moriy\'on \cite{L+M+M+86}, and the Sobolev regularity case
was treated in \cite{dll01}. A proof of the classical (as well as the
smooth one, assuming volume-preservation) result using microlocal
analysis was done in \cite{guillarmou+2017}. A Liv\v{s}ic theorem for
sections of vector bundles also using microlocal analysis was recently
established in \cite{cekic+lefeuvre+2025}. See also Lefeuvre's book
\cite{lefeuvre_book+2025} for a more comprehensive (and readable)
survey of results and references.

The main goal of this paper is to investigate the obstacles to the
solvability of the Liv\v{s}ic equation on the space of differential
forms of different degrees using somewhat elementary means (i.e.,
without the use of microlocal analysis).\footnote{However, we do hope
  that in the near future using the heavy machinery of microlocal
  analysis may lead to results stronger than the ones in this paper.}

\paragraph{Invariant forms.} We will first describe the set of
invariant differential forms in all degrees. We set
\begin{displaymath}
  \text{Inv}^k(M,X) = \{ \omega \in C^1_X \Lambda^k(M) : L_X \omega = 0 \}.
\end{displaymath}
Some of the results in the following Proposition are well-known and
elementary, but we include them for completeness.

\begin{prop}   \label{prop:invariant}
  Let $\Phi$ be a smooth Anosov flow with infinitesimal generator
  $X$, preserving a smooth volume form $\Omega$. Then:

  \begin{enumerate}
  \item[(a)] $\text{\normalfont Inv}^0(M,X)$ consists of constant functions.
  \item[(b)] $\text{\normalfont Inv}^1(M,X) = \R \alpha$.
  \item[(c)] If the flow is asymmetric and $2 \leq k \leq n-2$, then
    $\text{\normalfont Inv}^k(M,X) = \{ \mathbf{0} \}$.
  \item[(d)] $\text{\normalfont Inv}^{n-1}(M,X) = \R \: i_X \Omega$.
  \item[(e)] $\text{\normalfont Inv}^n(M,X) = \R \Omega$.
  \end{enumerate}
  
\end{prop}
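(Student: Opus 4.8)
The plan is to treat the five cases in increasing order of difficulty, with the key analytic tool being a pointwise estimate on the flow-pushforward of a form paired against parallelepipeds, converting $L_X\omega = 0$ into an invariance statement $f_t^\ast\omega = \omega$ and then exploiting the hyperbolic dynamics on $E^{uu}$ and $E^{ss}$. First, for (a), a continuous $X$-invariant function is constant on every orbit, hence constant on a dense orbit (Anosov flows on closed manifolds with an invariant volume are transitive), hence constant by continuity. For (e), an $X$-invariant $n$-form $\omega$ can be written $\omega = h\,\Omega$ for a continuous function $h$; since $L_X\Omega = 0$ (equivalently $f_t^\ast\Omega = \Omega$) we get $L_X h = 0$, so $h$ is constant by (a). For (d), given $\omega \in \mathrm{Inv}^{n-1}(M,X)$, I would produce a vector field $Z$ with $i_Z\Omega = \omega$ (using that $\Omega$ is nowhere zero, so contraction with $\Omega$ is an isomorphism from vector fields to $(n-1)$-forms); then $L_X(i_Z\Omega) = i_{[X,Z]}\Omega + i_Z(L_X\Omega) = i_{[X,Z]}\Omega$, so invariance of $\omega$ forces $[X,Z] = 0$, i.e. $Z$ is flow-invariant. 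A flow-invariant continuous vector field over a transitive Anosov flow must be a constant multiple of $X$ (this is the classical rigidity: its component in $E^{uu}$ is killed by the forward contraction of $f_{-t}$ and its component in $E^{ss}$ by forward contraction of $f_t$, using $(\spadesuit)$ and continuity/boundedness on the compact $M$; the center component is a flow-invariant function times $X$, hence constant). Therefore $\omega \in \mathbb{R}\,i_X\Omega$, and conversely $L_X(i_X\Omega) = i_{[X,X]}\Omega + i_X L_X\Omega = 0$.

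For (b), let $\omega \in \mathrm{Inv}^1(M,X)$. Decompose $\omega$ using the (H\"older) splitting $TM = E^{uu}\oplus E^c \oplus E^{ss}$: write $\omega(X) = \varphi$, a continuous function, and note $L_X\varphi = L_X(\omega(X)) = (L_X\omega)(X) + \omega([X,X]) = 0$, so $\varphi$ is a constant $a$ by (a). Replacing $\omega$ by $\omega - a\alpha$ (legitimate since $\alpha$ is $X$-invariant and $\alpha(X)=1$), we may assume $\omega(X) = 0$, i.e. $\omega$ annihilates $E^c$; it remains to show such an invariant $\omega$ annihilating $E^c$ vanishes identically. For $w \in E^{uu}_x$, invariance gives $\omega_x(w) = (f_{-t}^\ast\omega)_x(w) = \omega_{f_{-t}x}(T_x f_{-t} w)$, and $\|T_x f_{-t} w\|\to 0$ as $t\to\infty$ by $(\spadesuit)$ while $\|\omega\|_\infty < \infty$, forcing $\omega|_{E^{uu}} = 0$; symmetrically, pushing forward by $f_t$ kills $\omega|_{E^{ss}}$. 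Hence $\omega = 0$ and $\mathrm{Inv}^1 = \mathbb{R}\alpha$.

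Case (c) is the heart of the proposition and the place I expect the main obstacle. Let $\omega \in \mathrm{Inv}^k(M,X)$ with $2 \le k \le n-2$; I want to show $\omega_x = 0$ for all $x$, i.e. $\omega_x(u_1,\dots,u_k) = 0$ for every $k$-tuple of tangent vectors at $x$. By multilinearity and the splitting $TM = E^{uu}\oplus E^c \oplus E^{ss}$ it suffices to evaluate $\omega$ on tuples each of whose entries lies in one of the three summands. If some $u_i = X$ (the $E^c$ case), then $\omega_x(\dots,X,\dots) = \pm(i_X\omega)_x(\dots)$ and $i_X\omega$ is itself $X$-invariant of degree $k-1$; so by an induction on degree (with base cases $k-1 \in \{0,1\}$ handled by (a),(b), noting $i_X\omega$ annihilates... — one must check the relevant contraction actually lands in a space where the lower-degree conclusion forces vanishing of this evaluation) we reduce to tuples with no $X$-entry, i.e. all $u_i \in E^{uu}_x \oplus E^{ss}_x = E^{su}_x$. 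Now if at least one $u_i \in E^{uu}_x$, apply invariance in negative time: $\omega_x(u_1,\dots,u_k) = \omega_{f_{-t}x}(T_xf_{-t}u_1,\dots,T_xf_{-t}u_k)$, and the operator-norm bound gives $|\omega_x(u_1,\dots,u_k)| \le \|\omega\|_\infty\,\|T_xf_{-t}(u_1\wedge\cdots\wedge u_k)\|_g$, which by the asymmetry hypothesis (and the remark following the definition, extending the volume-shrinking to lower-dimensional parallelepipeds with a strong-unstable side) decays like $Ce^{-\lambda t}\to 0$; hence that evaluation is $0$. Since $\dim E^{uu} + \dim E^{ss} = n-1 \ge k+1 > k$ is \emph{not} automatic, the genuinely remaining possibility is a tuple lying entirely in $E^{ss}_x$ — but a $k$-form with $k \le n-2$ evaluated on $k$ vectors from the $(\dim E^{ss})$-dimensional space $E^{ss}_x$: here I would instead push forward by $f_t$ ($t \to +\infty$), where every vector in $E^{ss}$ contracts, so again $|\omega_x(u_1,\dots,u_k)| \le \|\omega\|_\infty\prod_i \|T_xf_t u_i\| \le \|\omega\|_\infty c^{-k}e^{-k\nu t}\to 0$. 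Combining all cases by multilinearity yields $\omega_x = 0$. The delicate points to get right are (i) the induction bookkeeping for the $X$-entry reduction so that the base cases really do give vanishing rather than "proportional to $\alpha$" (one checks that after the first contraction the surviving evaluations are on $E^{su}$-vectors where (b)'s argument gives $0$), and (ii) making sure that for \emph{every} admissible pure tuple at least one of the two limits ($t\to+\infty$ or $t\to-\infty$) produces decay — which it does, since a tuple with a strong-unstable entry decays under $f_{-t}$ and a tuple with no strong-unstable entry (hence lying in $E^{ss}_x$, after the $X$-reduction) decays under $f_{+t}$.

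Finally, for each of (b)–(e) one should record the trivial converse inclusions ($L_X\alpha = 0$ because $\ker\alpha = E^{su}$ and $\alpha(X)=1$ are flow-invariant; $L_X(i_X\Omega)=0$ and $L_X\Omega=0$ by Cartan's formula and $f_t^\ast\Omega=\Omega$), which make the stated equalities rather than mere inclusions.
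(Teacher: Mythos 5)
Your proposal is correct, and for parts (a), (b), (e) and the analytic core of (c) it runs along essentially the same lines as the paper: convert $L_X\omega=0$ into $f_t^\ast\omega=\omega$, then kill each pure evaluation by pushing forward or backward along the flow, using $(\spadesuit)$ for strong-stable/unstable entries and the asymmetry hypothesis (extended to lower-dimensional parallelepipeds) for tuples with a strong-unstable side. Two of your sub-arguments genuinely diverge from the paper's. In (c), the paper works with the coarser splitting $TM=E^{cs}\oplus E^{uu}$ and disposes of center entries directly inside its ``Case 1'': after decomposing $v_j=v_j^c+v_j^s$, any pure term with two $X$-entries dies by antisymmetry and any term with at most one $X$-entry has (since $k\ge 2$) at least one $E^{ss}$-entry and hence decays under $f_{t\ast}$ as $t\to+\infty$; you instead peel off $X$-entries by an induction on degree via $i_X\omega\in\mathrm{Inv}^{k-1}$, which works (your base-case check that $i_X\omega\in\R\alpha$ still vanishes on the surviving $E^{su}$-tuples is exactly the point to verify) but costs the extra bookkeeping you flag, whereas the paper's version is a one-step observation. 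In (d), the paper contracts: $i_X\Theta$ is an invariant $(n-2)$-form, hence zero \emph{by (c)}, and then $\alpha\wedge\Theta$ is an invariant $n$-form equal to $c\,\Omega$, giving $\Theta=i_X(\alpha\wedge\Theta)=c\,i_X\Omega$; you instead pass to the divergence-free vector field $Z$ with $i_Z\Omega=\Theta$, deduce $f_{t\ast}Z=Z$, and invoke the standard rigidity that an invariant continuous vector field over a transitive Anosov flow is a constant multiple of $X$. Your route is slightly longer but buys something real: it proves (d) without any appeal to asymmetry, whereas the paper's proof of (d) silently inherits the asymmetry hypothesis through its use of (c), even though the statement of (d) does not list that hypothesis. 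Modulo the routine comass inequality $\abs{\omega_x(u_1,\dots,u_k)}\le C\norm{u_1\wedge\cdots\wedge u_k}_g$ (which the paper also uses implicitly) and the interpretation of $[X,Z]$ for merely continuous $Z$ via the bundle isomorphism $Z\mapsto i_Z\Omega$, I see no gaps.
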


\begin{proof}
  (a) and (e) are clear. To prove (b), assume $L_X \omega = 0$, for
  some $\omega \in C^1_X \Lambda^1(M)$.  Then
  $f_t^\ast \omega = \omega$, for all $t$, which clearly implies that
  $\omega(v) = 0$, for all $v \in E^{ss} \oplus E^{uu}$. Thus
  $\omega = \psi \alpha$, for some continuous $\psi : M \to \R$. Since
    \begin{displaymath}
      0 = L_X \omega = (X\psi) \alpha = \psi L_X \alpha = (X \psi) \alpha
    \end{displaymath}
    it follows that $\psi$ is flow invariant. Since the flow is
    ergodic (being volume preserving), $\psi$ is constant a.e., hence
    constant by continuity.

    (c) Assume the flow is asymmetric, $2 \leq k \leq n-2$, and
    $L_X \eta = 0$, for a continuous $k$-form $\eta$. We again have
    $f_t^\ast \eta = \eta$, for all $t$. Let $v_1, \ldots, v_k$ be
    arbitrary linearly independent vectors in the same tangent space
    of $M$. We claim that $\eta(v_1,\ldots,v_k) = 0$. Since
    $TM = E^{cs} \oplus E^{uu}$ and $\eta$ is multilinear, it is
    sufficient to show $\eta(v_1,\ldots,v_k) = 0$ in the following two
    cases:

  \begin{enumerate}
  \item[\textbf{Case 1}:] $\{ v_1, \ldots, v_k \} \subset E^{cs}$.
  \item[\textbf{Case 2}:]
    $\{ v_1, \ldots, v_k \} \subset E^{uu} \cup E^{cs}$ and at
    least one vector $v_j$ is in $E^{uu}$.
  \end{enumerate}

  \nopagebreak[4] In Case 1, by decomposing each $v_j$ into the sum
  $v_j = v_j^c + v_j^s \in E^c \oplus E^{ss}$, using the flow
  invariance of $\eta$, and the fact that $k \geq 2$, we obtain
    \begin{displaymath}
      \eta(v_1,\ldots,v_k) = \eta(f_{t \ast}(v_1), \ldots, f_{t
        \ast}(v_k)) \to 0,
    \end{displaymath}
    as $t \to +\infty$.

    In Case 2, the asymmetry of the flow implies 
    \begin{displaymath}
      \eta(v_1,\ldots,v_k) = \eta(f_{t \ast}(v_1), \ldots, f_{t
        \ast}(v_k)) \to 0,
    \end{displaymath}
    as $t \to -\infty$. Thus $\eta = 0$, as desired.

    To prove (d), assume $L_X \Theta = 0$, for some 
    $\Theta \in C^1_X \Lambda^{(n-1)}(M)$. Observe that since
    $L_X (i_X \Theta) = i_X L_X \Theta = 0$\footnote{Note that $i_X$
      and $L_X$ do commute on $C^1_X \Lambda^\ast(M)$.}, $i_X \Theta$
    is a continuous invariant $(n-2)$-form, hence zero by (c).

    Consider the continuous $n$-form $\alpha \wedge \Theta$. Since
    $L_X(\alpha \wedge \Theta) = L_X \alpha \wedge \Theta + \alpha
    \wedge L_X \Theta = 0$, $\alpha \wedge \Theta$ is invariant, hence
    $\alpha \wedge \Theta = c \: \Omega$, for some constant $c$. It
    follows that
    \begin{displaymath}
      \Theta = i_X (\alpha \wedge \Theta) = i_X (c \: \Omega) = c \:
      i_X \Omega,
    \end{displaymath}
    as desired.
\end{proof}

\begin{thm}[Liv\v{s}ic theorem for forms of intermediate degree]    \label{thm:Lie}
  Let $\Phi$ be an asymmetric Anosov flow on a closed manifold $M$,
  and let $\xi$ be a continuous $k$-form on $M$, with
  $2 \leq k \leq n-2$. Then:

  \begin{enumerate}

  \item[(a)] There exists a unique continuous $k$-form $\eta$ such
    that $L_X \eta = \xi$.

  \item[(b)] If $\xi, E^{uu}$, and $E^{cs}$ are $C^1$ (as in the case
    of volume-preserving codimension one Anosov flows in dimensions
    $n \geq 4$), then there exists a family $(\eta_t)_{t \geq 0}$ in
    $C^1 \Lambda^k(M)$ such that
  \begin{displaymath}
    \eta_t \to \eta \quad \text{and} \quad L_X \eta_t \to L_X \eta = \xi,
  \end{displaymath}
  as $t \to \infty$, both with respect to the $C^0$-norm. Each
  $L_X \eta_t$ is also $C^1$.

  \end{enumerate}
\end{thm}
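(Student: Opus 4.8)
\section*{Proof proposal}

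The plan is to use the classical Liv\v{s}ic integral, adapted to forms. Formally, any solution of $L_X\eta=\xi$ satisfies $f_t^\ast\eta-\eta=\int_0^t f_s^\ast\xi\,ds$, so one wants to let $t\to\pm\infty$ and solve for $\eta$; but since $\xi$ is merely continuous, $f_s^\ast\xi$ need not decay in either time direction. I would therefore first split $\xi$ along the (H\"older, hence continuous) $Tf_t$-invariant splitting $TM=E^{cs}\oplus E^{uu}$. Let $\pi^{cs},\pi^{uu}$ be the associated projections (continuous, in fact H\"older, since the two summands are transverse) and set
\begin{displaymath}
  \xi_1(v_1,\dots,v_k)=\xi(\pi^{cs}v_1,\dots,\pi^{cs}v_k),\qquad \xi_2=\xi-\xi_1 .
\end{displaymath}
These are continuous $k$-forms; $\xi_1$ vanishes as soon as one argument lies in $E^{uu}$, and $\xi_2$ vanishes when all arguments lie in $E^{cs}$. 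Since $Tf_t$ preserves each summand it commutes with $\pi^{cs},\pi^{uu}$, so $(f_s^\ast\xi_1)_p(v_1,\dots,v_k)=\xi_{f_s p}\big(Tf_s\pi^{cs}v_1,\dots,Tf_s\pi^{cs}v_k\big)$, and similarly for $\xi_2$. Using $\abs{\omega_p(z_1,\dots,z_k)}\le\abs{\omega_p}_g\,\norm{z_1\wedge\cdots\wedge z_k}_g$ (evaluate $\omega_p$ on an orthonormal basis of $\operatorname{span}(z_i)$), the core of the argument is the pair of estimates
\begin{displaymath}
  \norm{f_s^\ast\xi_1}_\infty\le Ce^{-\nu s},\qquad \norm{f_{-s}^\ast\xi_2}_\infty\le Ce^{-\lambda s}\qquad(s\ge0),
\end{displaymath}
with $C$ depending only on $\norm{\xi}_\infty$, $M$, $g$. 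For the first: the vectors $w_i=\pi^{cs}v_i\in E^{cs}_p$ have bounded length, and writing $w_i=a_iX+u_i$ with $u_i\in E^{ss}$, expanding the wedge $w_1\wedge\cdots\wedge w_k$ and using $Tf_sX=X$ together with $\norm{Tf_su_i}\le ce^{-\nu s}\norm{u_i}$, every surviving monomial contains at least $k-1\ge1$ of the $u_i$ (because $\dim E^c=1$ and $k\ge2$), so $\norm{Tf_s(w_1\wedge\cdots\wedge w_k)}_g\le C'e^{-\nu s}$. For the second: expand $\xi_2$ into monomials each having at least one entry in $E^{uu}$ and apply the asymmetry hypothesis, in the form valid for $k$-dimensional parallelepipeds with a side in $E^{uu}$ ($k\le n-2$), as noted after the definition.

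Granting these estimates, I would define
\begin{displaymath}
  \eta_1=-\int_0^\infty f_s^\ast\xi_1\,ds,\qquad \eta_2=\int_0^\infty f_{-s}^\ast\xi_2\,ds,\qquad \eta=\eta_1+\eta_2,
\end{displaymath}
the integrals converging in the $C^0$-norm. Using $f_t^\ast f_s^\ast=f_{s+t}^\ast$ one gets $f_t^\ast\eta_1=-\int_t^\infty f_u^\ast\xi_1\,du$ and $f_t^\ast\eta_2=\int_{-\infty}^t f_u^\ast\xi_2\,du$, which are $C^1$ curves into $C^0\Lambda^k(M)$ with $t$-derivatives $f_t^\ast\xi_1$ and $f_t^\ast\xi_2$ by the fundamental theorem of calculus; at $t=0$ the difference quotients converge uniformly, so $\eta_1,\eta_2\in C^1_X\Lambda^k(M)$ with $L_X\eta_1=\xi_1$, $L_X\eta_2=\xi_2$, hence $L_X\eta=\xi$. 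Uniqueness is immediate: if $L_X\eta=L_X\eta'=\xi$ then $\eta-\eta'\in\operatorname{Inv}^k(M,X)=\{\mathbf0\}$ for $2\le k\le n-2$ by Proposition~\ref{prop:invariant}(c). This proves (a).

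For (b), assume in addition that $\xi$, $E^{uu}$, $E^{cs}$ are $C^1$ (valid for volume-preserving codimension one flows with $n\ge4$ by the regularity facts in Section~\ref{sec:preliminaries}); then $\pi^{cs},\pi^{uu}$, and hence $\xi_1,\xi_2$, are $C^1$. Truncate the integrals,
\begin{displaymath}
  \eta_t=-\int_0^t f_s^\ast\xi_1\,ds+\int_0^t f_{-s}^\ast\xi_2\,ds .
\end{displaymath}
Since the flow is $C^\infty$ and $\xi_1,\xi_2\in C^1$, the integrand $f_s^\ast\xi_i$ is $C^1$ in the base point, jointly continuously in $(s,p)$, so $\eta_t\in C^1\Lambda^k(M)$; the same FTC computation gives $L_X\eta_t=\xi-f_t^\ast\xi_1-f_{-t}^\ast\xi_2$, which is again $C^1$ (a combination of pullbacks of $C^1$ forms by smooth diffeomorphisms) and which, together with $\eta_t$, converges to $\xi$ and to $\eta$ respectively in the $C^0$-norm as $t\to\infty$ by the two decay estimates. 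This is the asserted family.

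The main obstacle is the pair of decay estimates in the first step. One must (i) choose the splitting $E^{cs}\oplus E^{uu}$ (rather than $E^{ss}\oplus E^c\oplus E^{uu}$), so that the $E^{cs}$-part requires nothing beyond $k\ge2$ and $\dim E^c=1$ and is handled entirely in forward time; (ii) carry out the exterior-algebra bookkeeping showing $Tf_s$ contracts $k$-parallelepipeds inside $E^{cs}$; and (iii) invoke the asymmetry hypothesis in the right form, for $k$-dimensional rather than $(n-2)$-dimensional parallelepipeds. Everything after that — convergence of the Liv\v{s}ic integrals in $C^0$, the identification $L_X\eta=\xi$, uniqueness, and the $C^1$-regularity in (b) — is routine.
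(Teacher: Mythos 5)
Your proposal is correct and follows essentially the same route as the paper: split $\xi$ along $TM=E^{cs}\oplus E^{uu}$, integrate the $E^{cs}$-part forward in time (using $k\ge 2$ and $\dim E^c=1$) and the part involving $E^{uu}$ backward in time (using asymmetry for $k\le n-2$), then verify $L_X\eta=\xi$ by the standard pullback/FTC computation and get uniqueness from the vanishing of invariant forms in intermediate degrees. Your use of the projections $\pi^{cs},\pi^{uu}$ is just a more explicit packaging of the paper's ``define $\eta$ in Cases 1 and 2 and extend by multilinearity,'' and your two decay estimates make precise the convergence the paper attributes to asymmetry.
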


\begin{proof}
  (a) (\textsf{Uniqueness}) Follows directly from Proposition~\ref{prop:invariant}
  (c).

  \medskip
   
  \noindent (\textsf{Existence}) To prove the existence of $\eta$,
  given a continuous $k$-form $\xi$, we need to define
  $\eta(v_1,\ldots,v_k)$ for all vectors $v_1, \ldots,v_k \in TM$. By
  the same argument as in the proof of part (c) of Proposition~\ref{prop:invariant},
  it is enough to specify $\eta(v_1,\ldots,v_k)$ in Cases 1 and 2
  defined above, then extend $\eta$ by multi-linearity and the
  alternating property.

    For $t > 0$ define
    \begin{displaymath}
    \eta_t(v_1,\ldots,v_k) =
    \begin{cases}
      - {\displaystyle \int_0^t (f_s^\star \xi) (v_1,\ldots,v_k)
        \: ds} & \text{in Case 1,} \\
      {\displaystyle \int_0^t (f_{-s}^\star \xi) (v_1,\ldots,v_k)
        \: ds} & \text{in Case 2}.
    \end{cases}
  \end{displaymath}
  The asymmetry of the flow guarantees that $\eta_t$ converges, as
  $t \to \infty$, in the $C^0$-sense to a continuous form $\eta$. It
  is clear that if $\xi, E^{uu}$, and $E^{cs}$ are $C^1$, then so is
  $\eta_t$, for every $t \geq 0$.
   
  Let us show that $L_X \eta = \xi$, i.e.,
  $L_X \eta(v_1,\ldots, v_k) = \xi(v_1,\ldots, v_k)$, for all
  $v_1, \ldots, v_k \in TM$. As above, it suffices to prove this
  in each of the two cases above. In Case 1, we have:
   \begin{align*}
     (f_{\tau}^\ast \eta)(v_1,\ldots,v_k) 
     & = \eta(f_{\tau \ast}(v_1),\ldots, f_{\tau \ast}(v_k)) \\
     & = - \int_0^\infty f_s^\ast \xi(f_{\tau \ast}(v_1),
       \ldots, f_{\tau \ast}(v_k)) \: ds \\
     & = - \int_0^\infty f_{s+\tau}^\ast \xi(v_1,\ldots,v_k) \:
       ds \\
     & = - \int_0^\infty f_t^\ast \xi(v_1,\ldots,v_k) \:
       ds + \int_0^{\tau} f_t^\ast \xi(v_1,\ldots,v_k) \: ds \\
     & = \eta + \int_0^{\tau} f_t^\ast \xi(v_1,\ldots,v_k) \: ds,
   \end{align*}
   for all $\tau \geq 0$. Differentiating both sides with respect ot
   $\tau$ at zero, we obtain
   $L_X \eta(v_1,\ldots,v_{n-2}) = \xi(v_1,\ldots,v_k)$. Case 2 is
   dealt with in a similar way. This proves that $L_X \eta = \xi$.

   A similar calculation yields
   \begin{displaymath}
     L_X \eta_t =
     \begin{cases}
       \xi - f_t^\star \xi & \text{in Case 1}, \\
       \xi - f_{-t}^\star \xi & \text{in Case 2}.
     \end{cases}
   \end{displaymath}
   It follows that $L_X \eta_t \to \xi$, as $t \to \infty$, in the
   $C^0$-sense; in Case 2 this follows again by asymmetry. Finally,
   observe that if $\xi, E^{uu}$, and $E^{cs}$ are $C^1$, then so is $L_X \eta_t$.
 \end{proof}

 \paragraph{The action of $L_X$ in all degrees.} We now investigate
 the action of the Lie derivative $L_X$ on differential forms of all
 degrees. The well-known results are included for completeness.

\begin{thm}     \label{thm:decomposition}
  Let $\Phi$ be a smooth Anosov flow on a closed manifold $M$. 

  \begin{enumerate}
  \item[(a)] If $\Phi$ is transitive, then the image of
    $L_X^{(0)} : C^1_X(M) \to C^0(M)$ consists of continuous functions whose
    integral over all periodic orbits equals zero.

  \item[(b)] If $\Phi$ is transitive, then the image of
    $L_X^{(1)} : C^1_X \Lambda^1(M) \to C^0 \Lambda^1(M)$ consists of continuous
    1-forms $\omega$ such that $\int_\gamma \omega = 0$, for all
    periodic orbits $\gamma$ of $\Phi$.

  \item[(c)] If $\Phi$ is asymmetric and $2 \leq k \leq n-2$, then
    $L_X^{(k)} : C^1_X \Lambda^k(M) \to C^0 \Lambda^k(M)$ is a bijection.

  \item[(d)] If $\Phi$ preserves a smooth volume form $\Omega$, then
    for every $C^1_X$-Riemannian metric $g$ on $M$, we have a
    $g$-orthogonal decomposition:
  \begin{displaymath}
    L^2 \Lambda^{n-1}(M) = \overline{\text{\normalfont image}(L_X^{(n-1)})} \oplus_g
    \mathbb{R} (\star_g \alpha), 
  \end{displaymath}
  where the closure is taken relative to the $L^2$-topology. If $g$ is
  an Anosov metric, then
  \begin{displaymath}
     L^2 \Lambda^{n-1}(M) = \overline{\text{\normalfont image}(L_X^{(n-1)})} \oplus_g
    \mathbb{R} (i_X \Omega).
  \end{displaymath}
  \item[(e)] If $\Phi$ preserves a smooth volume form $\Omega$, then
  the image of $L_X^{(n)} : C^1_X \Lambda^n(M) \to C^0 \Lambda^n(M)$
  consists of all $n$-forms of type $(X\psi) \Omega$, where
  $\psi \in C^1_X(M)$.
    
  \end{enumerate}

\end{thm}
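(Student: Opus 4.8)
I would treat the five parts in order of increasing difficulty, since the heavy lifting is concentrated in part (d). \textbf{Parts (c) and (e)} should be immediate. For (c), surjectivity of $L_X^{(k)}$ is precisely the existence assertion of Theorem~\ref{thm:Lie}(a) (the solution $\eta$ automatically lies in $C^1_X\Lambda^k(M)$, since $L_X\eta=\xi$ is continuous), and injectivity is the uniqueness assertion, i.e.\ $\mathrm{Inv}^k(M,X)=\{0\}$ from Proposition~\ref{prop:invariant}(c). For (e), write an arbitrary $n$-form as $\psi\,\Omega$ with $\psi$ continuous; the invariance $L_X\Omega=0$ gives $L_X(\psi\,\Omega)=(X\psi)\,\Omega$, so $\psi\,\Omega\in C^1_X\Lambda^n(M)$ iff $\psi\in C^1_X(M)$, and the image of $L_X^{(n)}$ is exactly $\{(X\psi)\,\Omega:\psi\in C^1_X(M)\}$.

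\textbf{Parts (a) and (b).} Necessity of the periodic-orbit condition is the fundamental theorem of calculus: if $X\varphi=f$ and $\gamma:t\mapsto f_t(x)$ has period $T$, then $\int_\gamma f=\int_0^T(X\varphi)(f_tx)\,dt=\varphi(f_Tx)-\varphi(x)=0$; for 1-forms, Cartan's formula gives $(L_X\omega)(X)=X(\omega(X))$ while $(f_h^\ast\omega)(X)=(\omega(X))\circ f_h$, so $t\mapsto\omega(X)(f_tx)$ is $C^1$ and $\int_\gamma L_X\omega=\omega(X)(f_Tx)-\omega(X)(x)=0$. Sufficiency in (a) is the classical Liv\v{s}ic theorem for transitive Anosov flows \cite{livsic+71,livsic+72}, applied in the merely-continuous category (fix $x_0$ with dense orbit, put $\varphi(f_tx_0):=\int_0^tf(f_sx_0)\,ds$, and use the Anosov closing lemma together with the vanishing periodic integrals to show this is uniformly continuous along the dense orbit, hence extends to $\varphi\in C^1_X(M)$ with $X\varphi=f$). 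Sufficiency in (b) reduces to (a): given a continuous 1-form $\xi$ with $\int_\gamma\xi=0$ for all periodic $\gamma$, split $\xi=\xi(X)\,\alpha+\tilde\xi$ with $\tilde\xi(X)=0$; since $\int_0^T\xi(X)(f_tx)\,dt=\int_\gamma\xi=0$, part (a) yields $\psi\in C^1_X(M)$ with $X\psi=\xi(X)$. Then define a 1-form $\beta$ with $\beta(X)=0$ by $\beta_p(v)=\int_0^\infty\tilde\xi\big((f_{-s})_\ast v\big)\,ds$ for $v\in E^{uu}_p$ and $\beta_p(v)=-\int_0^\infty\tilde\xi\big((f_{s})_\ast v\big)\,ds$ for $v\in E^{ss}_p$; the integrals converge absolutely by $(\spadesuit)$, H\"older continuity of $E^{ss}$ and $E^{uu}$ makes $\beta$ continuous, and differentiating $\beta\big((f_t)_\ast v\big)$ in $t$ at $0$ (Leibniz rule) shows $L_X\beta=\tilde\xi$ on $E^{ss}\oplus E^{uu}$, while $(L_X\beta)(X)=X(0)=0=\tilde\xi(X)$. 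Hence $\omega:=\psi\,\alpha+\beta\in C^1_X\Lambda^1(M)$ and $L_X\omega=(X\psi)\,\alpha+L_X\beta=\xi$. (Note that the periodic-orbit obstruction enters only through the $E^c$-component; the transverse directions are always solvable.)

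\textbf{Part (d).} Proposition~\ref{prop:general-L_X}(c) with $k=n-1$ identifies $\big[\,\overline{\mathrm{image}(L_X^{(n-1)})}\,\big]^{\perp_g}$ with $\ker\big(\star_g L_X^{(1)}\star_g\big)$. Since $\star_g$ is injective and restricts to a bijection of $C^1_X$-forms, a form $\zeta$ lies in this kernel iff $\star_g\zeta$ is a continuous invariant 1-form, i.e.\ iff $\star_g\zeta\in\mathrm{Inv}^1(M,X)=\mathbb{R}\alpha$ by Proposition~\ref{prop:invariant}(b); applying $\star_g^{-1}$ gives $\ker\big(\star_g L_X^{(1)}\star_g\big)=\mathbb{R}\,\star_g\alpha$, which is the asserted $L^2$-orthogonal decomposition. (Concretely, $\star_g\alpha\perp\mathrm{image}(L_X^{(n-1)})$ because, for $\omega\in C^1_X\Lambda^{n-1}(M)$, $\langle\star_g\alpha,L_X\omega\rangle_g=(-1)^{n-1}\int_M L_X\omega\wedge\alpha=(-1)^{n-1}\int_M L_X(\omega\wedge\alpha)=0$, using $L_X\alpha=0$, the product rule, and $\int_M L_X\mu=0$ for $\mu\in C^1_X\Lambda^n(M)$ — the last being the invariance of $\Omega$.) For the Anosov-metric version, Corollary~\ref{cor:dual_form} gives $\star_g(i_X\Omega)=(-1)^{n-1}\alpha$; applying $\star_g$ and using $\star_g\star_g=(-1)^{n-1}\mathrm{id}$ on $(n-1)$-forms yields $\star_g\alpha=i_X\Omega$, hence the second displayed decomposition.

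\textbf{Main obstacle.} I expect the only genuine difficulty to be in (d), and it is structural rather than computational: the identification of the $L^2$-orthogonal complement of $\overline{\mathrm{image}(L_X^{(n-1)})}$ with a space of \emph{continuous} invariant forms hinges on $L_X^{(1)}$ being a closed unbounded operator whose adjoint has the ``expected'' domain $\star_g\big(C^1_X\Lambda^{n-1}(M)\big)$ — that is, on Proposition~\ref{prop:general-L_X}(b)--(c) — so that no ``rough'' (non-$C^1_X$) invariant $(n-1)$-forms slip into the kernel beyond multiples of $\star_g\alpha$. Granting that, Propositions~\ref{prop:invariant} and~\ref{prop:general-L_X} do the rest. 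The only other point requiring attention is sufficiency in (a) and (b), where one must invoke the Liv\v{s}ic theorem in the merely-continuous category, not only its usual H\"older form.
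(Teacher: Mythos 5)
Your proposal is correct, and for parts (a), (c), (d), (e) and the necessity halves of (a)--(b) it follows essentially the same route as the paper: (c) is Theorem~\ref{thm:Lie}, (e) is the one-line computation $L_X(\psi\,\Omega)=(X\psi)\,\Omega$, and (d) is exactly the paper's argument via Proposition~\ref{prop:general-L_X}(c), $\ker(L_X^{(1)})=\mathbb{R}\alpha$, and Corollary~\ref{cor:dual_form}; your explicit verification that $\star_g\alpha\perp_g\mathrm{image}(L_X^{(n-1)})$ is a useful addition but not a new idea. The genuine divergence is in the sufficiency direction of (b). The paper handles the component of $\omega$ transverse to $X$ by forming the $2$-form $\alpha\wedge\omega$ and invoking part (c) to write it as $L_X\xi$ for a continuous $2$-form $\xi$, then contracting with $X$; this reduction uses the intermediate-degree Liv\v{s}ic theorem, hence the asymmetry hypothesis and $n\geq 4$, even though (b) is stated for any transitive Anosov flow. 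You instead construct the transverse part $\beta$ directly by integrating $\tilde\xi$ forward along $E^{ss}$ and backward along $E^{uu}$, with convergence supplied by $(\spadesuit)$ alone; this is the degree-one analogue of the construction in the proof of Theorem~\ref{thm:Lie} and has the advantage of using only the stated hypotheses of (b) (transitivity plus hyperbolicity), at the cost of having to check continuity of $\beta$ and of $L_X\beta$ by hand rather than quoting (c). Both arguments are sound where their hypotheses hold; yours is the more robust one for (b) as stated. Your closing caveat about needing the Liv\v{s}ic theorem in the merely continuous (rather than H\"older) category is well taken --- the paper simply cites the classical theorem at this point --- but since the paper assumes this too, it is not a gap in your argument relative to the paper's.
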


\begin{proof}
  Part (a) is just the classical Liv\v{s}ic theorem. Part (c) is a
  restatement of Theorem~\ref{thm:Lie}. Part (e) is easy to prove. To
  prove (b), let us first show that if $\omega = L_X \xi$, for some
  $\xi \in C^1_X \Lambda^1(M)$, then $\int_\gamma \omega = 0$, for
  every closed orbit $\gamma$. Indeed, for every periodic orbit
  $\gamma$, we have:
  \begin{align*}
    \int_\gamma \omega & = \int_\gamma L_X \xi \\
                       & = \int_\gamma \left. \frac{d}{dt} \right|_0 f_t^\ast \xi \\
                       & = \left. \frac{d}{dt} \right|_0 \int_\gamma f_t^\ast \xi \\
                       & = \left. \frac{d}{dt} \right|_0 \int_\gamma \omega \\
    & = 0.
  \end{align*}
  Now assume that $\int_\gamma \omega = 0$, for every periodic orbit
  $\gamma$. Let $\varphi = \omega(X)$. Since the integral of $\varphi$
  over every periodic orbit $\gamma$ is zero, the classical Liv\v{s}ic
  theorem yields a function $\psi \in C^1_X(M)$ such that
  $\varphi = X\psi$. Set $\beta = \alpha \wedge \omega$. It follows
  from (c) that $\beta = L_X \xi$, for some continuous 2-form
  $\xi$. Contracting $\alpha \wedge \omega = L_X \xi$ by $X$ (i.e.,
  applying $i_X$ to both sides), we obtain
  \begin{displaymath}
    \omega - \varphi \alpha = i_X L_X \xi = L_X (i_X \xi).
  \end{displaymath}
  Thus
  \begin{displaymath}
    \omega = (X \psi) \alpha + L_X (i_X \xi) = L_X (\psi \alpha + i_X \xi),
  \end{displaymath}
  as desired.

  Part (d) follows from Propositions~\ref{prop:general-L_X} and
  \ref{prop:invariant}. Indeed,
  \begin{displaymath}
    L^2 \Lambda^{n-1}(M) = \overline{\text{image}(\normalfont
      L_X^{(n-1)})} \oplus_g \text{image}(\normalfont
      L_X^{(n-1)})^{\perp_g}.
  \end{displaymath}
  By Proposition~\ref{prop:general-L_X},
  $\text{image}(\normalfont L_X^{(n-1)})^{\perp_g} = \ker(\ast_g
  L_X^{(1)} \ast_g)$. Since $\star_g$ is an isomorphism and
  $\ker(L_X^{(1)}) = \R \alpha$ (Prop.~\ref{prop:invariant} (b)), the result follows. Recall that if
  $g$ is an Anosov metric, then
  $\star_g (i_X \Omega) = (-1)^{n-1} \alpha$.
\end{proof}

\begin{remark}
  Observe that if
  $L^2 B^{n-1}(M) \subset \overline{\text{image}(\normalfont
    L_X^{(n-1)})}$, then part (d) of Theorem~\ref{thm:decomposition}
  implies that $i_X \Omega$ is $g$-orthogonal to exact forms, where
  $g$ is any Anosov metric for the flow. Since $i_X \Omega$ is also
  closed, it follows that $i_X \Omega$ is harmonic with respect to $g$
  (at least formally speaking, since $g$ is not smooth). Thus the main
  theorem is consistent with the result of \cite{simic+23}, which
  states that $i_X \Omega$ is intrinsically harmonic if and only the
  flow admits a global cross section (where $X$ is allowed to be any
  non-singular smooth vector field which preserves a smooth volume
  form $\Omega$).
\end{remark}

\begin{cor}
  We have:
  \begin{displaymath}
    \overline{\text{\normalfont image}( L_X^{(n-1)} \! \restrict_{C^1
        Z^{n-1}(M)})} = \overline{\text{\normalfont image}(
      L_X^{(n-1)} \! \restrict_{C^1 B^{n-1}(M)})},
  \end{displaymath}
  where the closures are taken in $L^2 \Lambda^{n-1}(M)$.
\end{cor}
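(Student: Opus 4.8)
The inclusion $\supseteq$ is immediate: since $C^1 B^{n-1}(M)\subseteq C^1 Z^{n-1}(M)$, we have $\text{image}(L_X^{(n-1)}\restrict_{C^1 B^{n-1}(M)})\subseteq\text{image}(L_X^{(n-1)}\restrict_{C^1 Z^{n-1}(M)})$, hence the same inclusion of their $L^2$-closures.

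For $\subseteq$, I would first peel off the de Rham cohomology. Given $\omega\in C^1 Z^{n-1}(M)$ in the domain of $L_X$, choose a smooth closed $(n-1)$-form $\eta$ with $[\eta]=[\omega]$ in $H^{n-1}_{\mathrm{dR}}(M)$; then $\omega-\eta$ is a $C^1$ closed form with vanishing periods, hence exact by de Rham's theorem (or by the homotopy operators of Theorem~\ref{thm:goldshtein+troyanov}), so $\omega-\eta\in C^1 B^{n-1}(M)$, and $L_X(\omega-\eta)=L_X\omega-L_X\eta$ is continuous. Thus $L_X\omega=L_X\eta+L_X(\omega-\eta)$, the second summand already lying in $\text{image}(L_X\restrict_{B^{n-1}})$, and the whole matter reduces to showing $L_X\eta\in\overline{\text{image}(L_X\restrict_{B^{n-1}})}$ for every smooth closed $(n-1)$-form $\eta$. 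By Cartan's formula and $d\eta=0$ we have $L_X\eta=d(i_X\eta)$, where $\mu:=i_X\eta$ is a smooth $(n-2)$-form.

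The reduced problem I would attack by solving the Liv\v{s}ic equation one degree lower and then differentiating. Since $2\le n-2\le n-2$ and $\Phi$ is asymmetric, Theorem~\ref{thm:Lie} applies in degree $n-2$: there is a continuous $\zeta$ with $L_X\zeta=\mu$, and (in the codimension-one case, where $E^{uu}$ and $E^{cs}$ are $C^1$) a family $\zeta_t\in C^1\Lambda^{n-2}(M)$ with $\zeta_t\to\zeta$, $L_X\zeta_t\to\mu$ in $C^0$, each $L_X\zeta_t$ again $C^1$. Formally $L_X(d\zeta)=d(L_X\zeta)=d\mu=L_X\eta$, so $d\zeta$ is the exact primitive sought; the difficulty is that $d\zeta$ and the $d\zeta_t$ need not be $C^1$ (or even $L^2$) forms, because Liv\v{s}ic solutions are only controlled along the flow. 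I would get around this by applying the Gol'dshtein--Troyanov regularization operators $R_\veps$ of Theorem~\ref{thm:goldshtein+troyanov} to the $\zeta_t$: each $R_\veps\zeta_t$ is smooth, so $d(R_\veps\zeta_t)$ is a smooth exact $(n-1)$-form, hence lies in $C^1 B^{n-1}(M)\cap\operatorname{dom}(L_X)$, and $L_X\!\left(d(R_\veps\zeta_t)\right)=d\!\left(L_X(R_\veps\zeta_t)\right)\in\text{image}(L_X\restrict_{B^{n-1}})$. One then produces a diagonal sequence of these converging in $L^2$ to $d\mu=L_X\eta$; together with $\supseteq$ and the reduction above this yields the equality of the two closures, and in particular shows that the finitely many cohomological directions $d(i_X\eta_j)$ ($\eta_j$ a basis of smooth closed representatives of $H^{n-1}_{\mathrm{dR}}(M)$) contribute nothing new to $\overline{\text{image}(L_X\restrict_{B^{n-1}})}$.

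The crux is precisely that $L^2$-convergence of the regularized primitives: Theorem~\ref{thm:Lie} only yields $C^0$-control on $(n-2)$-forms, whereas we need $L^2$-control on their exterior derivatives (an $(n-1)$-form statement), and applying $d$ to a merely $C^0$-convergent sequence destroys convergence. The way out is never to let $d$ act on a merely continuous form: since $d$ commutes with pullback and with $L_X$, one has $L_X\zeta_t=\mu-f_{\pm t}^{\ast}\mu$ on the two pieces of $TM$ figuring in the proof of Theorem~\ref{thm:Lie}, so the exterior derivative of the error is built from $d\mu-f_{\pm t}^{\ast}(d\mu)$ — an expression involving only the \emph{smooth} form $d\mu$ — and this is driven to zero by the very exponential-contraction estimates (contraction along $E^{uu}$ in negative time, contraction of $E^{ss}$-parallelepipeds in positive time) behind Proposition~\ref{prop:exp-contr} and Theorem~\ref{thm:Lie}, applied now to $d\mu$ in place of $\mu$; combining this with $\|R_\veps\sigma-\sigma\|_{L^2}\to 0$ and the Friedrichs-type commutation of $R_\veps$ with the first-order operator $L_X$ gives the $L^2$ approximation. (Equivalently, one may run the argument on orthogonal complements via Proposition~\ref{prop:general-L_X}: an $L^2$-form orthogonal to $\text{image}(L_X\restrict_{B^{n-1}})$ is weakly co-closed along these lines since $\ker L_X^{(2)}=\{\mathbf{0}\}$ by Proposition~\ref{prop:invariant}, hence is also orthogonal to $\text{image}(L_X\restrict_{Z^{n-1}})$.)
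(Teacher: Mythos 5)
Your reduction to showing $L_X\eta = d(i_X\eta)\in\overline{\text{image}(L_X\restrict_{C^1B^{n-1}(M)})}$ for smooth closed $\eta$ is fine, but the crux step fails. You solve $L_X\zeta=\mu$ in degree $n-2$ and claim that the error in $d(L_X\zeta_t)-d\mu$, being built from $d\mu-f_{\pm t}^{\ast}(d\mu)$, tends to zero by the contraction estimates applied to $d\mu$. But $d\mu$ is an $(n-1)$-form, and the asymmetry estimates behind Proposition~\ref{prop:exp-contr} and Theorem~\ref{thm:Lie} hold only in degrees $2\le k\le n-2$; they break down in degree $n-1$ precisely because $\text{Inv}^{n-1}(M,X)=\R\,i_X\Omega\neq\{\mathbf 0\}$ (Proposition~\ref{prop:invariant}(d)). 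Concretely, for a parallelepiped with one side in $E^{uu}$ and the remaining $n-2$ sides in $E^{ss}$, volume preservation and the uniform transversality of $X$ give $\norm{T_pf_{\pm t}(v_1\wedge\cdots\wedge v_{n-1})}\geq \mathrm{const}>0$ for all $t$, so $f_{\pm t}^{\ast}(d\mu)$ does not tend to $0$; the surviving part is exactly the $i_X\Omega$-component of $d\mu$. Note also that your argument nowhere uses that $\mu$ has the special form $i_X\eta$ with $\eta$ closed, so if it worked it would show $d\mu\in\overline{\text{image}(L_X)}$ for \emph{every} smooth $(n-2)$-form $\mu$, i.e.\ $L^2B^{n-1}(M)\subset\overline{\text{image}(L_X)}$ unconditionally --- which by the Main Theorem would force every asymmetric volume-preserving Anosov flow to be a suspension. (Two secondary problems: $L_X\zeta_t$ is assembled case by case from the invariant splitting and then extended multilinearly, so its exterior derivative is not literally $d\mu-f_{\pm t}^{\ast}(d\mu)$; and the Gol'dshtein--Troyanov operators $R_\veps$ are only known to commute with $d$, not with $L_X$.)

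The paper's proof avoids degree-$(n-1)$ Liv\v{s}ic estimates altogether: it shows the two images have the same $g$-orthogonal complement. Given $\Theta\perp_g\text{image}(L_X\restrict_{C^1B^{n-1}(M)})$ and $\omega\in C^1Z^{n-1}(M)$, one writes $L_X\omega=d\,i_X\omega$, decomposes $\omega=\lim_j L_X\xi_j+c\,i_X\Omega$ via Theorem~\ref{thm:decomposition}(d), and observes that applying $i_X$ kills the obstruction term (since $i_Xi_X\Omega=0$), leaving $i_X\omega=\lim_j i_X d\, i_X\xi_j$; integration by parts then converts $\langle L_X\omega,\Theta\rangle_g$ into $\lim_j\langle L_X(d\,i_X\xi_j),\Theta\rangle_g=0$, with $d\,i_X\xi_j$ exact. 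That cancellation of the $i_X\Omega$-direction under $i_X$ is the mechanism your proposal is missing, and some such mechanism is unavoidable.
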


\begin{proof}
  Let $g$ be a smooth Riemannian metric on $M$. It suffices to show
  \begin{displaymath}
    \text{\normalfont image}( L_X^{(n-1)} \! \restrict_{C^1
        Z^{n-1}(M)})^{\perp_g} = \text{\normalfont image}(
      L_X^{(n-1)} \! \restrict_{C^1 B^{n-1}(M)})^{\perp_g}.
    \end{displaymath}
    The $\subset$ part of the proof is clear. Let us show the
    $\supset$ part. Let $\Theta \in \text{\normalfont image}(
      L_X^{(n-1)} \! \restrict_{C^1 B^{n-1}(M)})^{\perp_g}$ and
      $\omega \in C^1 Z^{n-1}(M)$ be arbitrary. We will show that
      $\langle \Theta, L_X \omega \rangle_g = 0$.

      First observe that $L_X \omega = d i_X \omega  + i_X d\omega = d
      i_X \omega$. Next, by Theorem~\ref{thm:decomposition} (d), we
      have
      \begin{displaymath}
        \omega = \lim_{j \to \infty} L_X \xi_j + c \: i_X \Omega,
      \end{displaymath}
      for some $\xi_j \in C^1_X \Lambda^{n-1}(M)$ and
      $c \in \R$. (This decomposition is not orthogonal with
        respect to $g$, but that will not matter.) It follows that
      \begin{displaymath}
        i_X \omega = i_X (\lim_{j \to \infty} L_X \xi_j) = \lim_{j \to
          \infty} i_X L_X \xi_j = \lim_{j \to \infty} i_X d i_X \xi_j.
      \end{displaymath}
\nopagebreak      Thus:
      \begin{align*}
        \langle L_X \omega, \Theta \rangle_g & = \int_M d i_X \omega
                                               \wedge  \star_g \Theta \\
                                             & = \int_M i_X \omega
                                               \wedge d (\star_g \Theta) \\
        & = \lim_{j \to \infty} \int_M i_X d i_X \xi_j \wedge d (\star_g \Theta) \\
        & = \lim_{j \to \infty} \int_M d i_X d i_X \xi_j \wedge
          \star_g \Theta \\
        & = \lim_{j \to \infty} \langle d i_X d i_X \xi_j, \Theta
          \rangle_g \\
        & =  \lim_{j \to \infty} \langle L_X (d i_X \xi_j), \Theta
          \rangle_g \\
        & = 0,
      \end{align*}
      since $\Theta \perp_g \text{\normalfont image}(
      L_X^{(n-1)} \! \restrict_{C^1 B^{n-1}(M)})$. This completes the
      proof.
    \end{proof}

    \begin{remark}
      The Corollary remains true if $C^1$ is replaced by $C^\infty$
      on both sides.
    \end{remark}

    Consider the Lie algebra $\mathfrak{X}(M,\Omega)$ of smooth
    divergence-free vector fields on $M$ (i.e.,
    $X \in \mathfrak{X}(M,\Omega)$ if $X$ is smooth and
    $L_X \Omega = 0$). Denote by $\text{Comm}(X,\Omega)$ its
    \textsf{commutator} subalgebra spanned by the Lie brackets $[Y,Z]$,
    where $Y, Z \in \mathfrak{X}(M,\Omega)$. It is well-known (cf.,
    \cite{arnold+69, lichnerowicz+74}) that
    there is a natural identification of $\mathfrak{X}(M,\Omega)$ with
    closed $(n-1)$-forms on $M$ and of $\text{Comm}(X,\Omega)$ with
    exact $(n-1)$-forms via the map $Z \mapsto i_Z \Omega$.

    \begin{cor}
      For every $Z \in \mathfrak{X}(M,\Omega)$ there is a sequence
      $(W_j)$ in $\text{\normalfont Comm}(X,\Omega)$ such that
      \begin{displaymath}
        [X,W_j] \to [X,Z],
      \end{displaymath}
      as $j \to \infty$, in the $L^2$-sense.
    \end{cor}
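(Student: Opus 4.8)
The plan is to transport the statement, via the classical identification $Z \mapsto i_Z\Omega$ of divergence-free vector fields with closed $(n-1)$-forms, to the Corollary proved just above (in the $C^\infty$ version given by the subsequent Remark). First I would record the bookkeeping identity: for any divergence-free $W$ we have $L_X(i_W\Omega) = i_W(L_X\Omega) + i_{[X,W]}\Omega = i_{[X,W]}\Omega$, since $L_X\Omega = 0$ and $[L_X, i_W] = i_{[X,W]}$. Thus under $\mathfrak{X}(M,\Omega) \cong C^\infty Z^{n-1}(M)$, $W \mapsto i_W\Omega$, taking the bracket with $X$ corresponds precisely to $L_X^{(n-1)}$, and the sub-identification $\text{Comm}(X,\Omega) \cong C^\infty B^{n-1}(M)$ is compatible with this.

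Next I would verify that $Z \mapsto i_Z\Omega$ is a topological isomorphism from the $L^2$-space of continuous vector fields onto $L^2\Lambda^{n-1}(M)$, so that $L^2$-convergence can be read on either side. Pointwise $v \mapsto i_v\Omega$ is a linear isomorphism $T_pM \to \Lambda^{n-1}T_p^*M$ depending continuously on the point over the compact manifold $M$, so the two $L^2$-norms are equivalent; in fact, when $\vol(g) = \Omega$, Corollary~\ref{cor:dual_form} together with the identity $\theta_Z \wedge i_Z\Omega = g(Z,Z)\,\Omega$ (obtained by contracting $\theta_Z \wedge \Omega = 0$ by $Z$) shows $\norm{i_Z\Omega}_g = \norm{Z}_g$, so it is even an isometry.

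With these translations in place, fix $Z \in \mathfrak{X}(M,\Omega)$ and set $\omega = i_Z\Omega \in C^\infty Z^{n-1}(M)$. Then $L_X\omega$ belongs to $\text{image}(L_X^{(n-1)} \! \restrict_{C^\infty Z^{n-1}(M)})$, hence to its $L^2$-closure, which by the Corollary above coincides with $\overline{\text{image}(L_X^{(n-1)} \! \restrict_{C^\infty B^{n-1}(M)})}$. Choose $\omega_j \in C^\infty B^{n-1}(M)$ with $L_X\omega_j \to L_X\omega$ in $L^2$, and let $W_j$ be the (smooth, divergence-free) vector field with $i_{W_j}\Omega = \omega_j$. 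Then $W_j \in \text{Comm}(X,\Omega)$ and, by the first step, $i_{[X,W_j]}\Omega = L_X\omega_j \to L_X\omega = i_{[X,Z]}\Omega$ in $L^2$; by the second step this is exactly $[X,W_j] \to [X,Z]$ in $L^2$, as desired.

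I do not expect any genuine obstacle: the substantive analytic fact — that, after applying $L_X$, exact $(n-1)$-forms approximate in $L^2$ the images of closed ones — is already the content of the preceding Corollary. The only points requiring attention are the two translation steps, namely the commutator identity $[L_X, i_W] = i_{[X,W]}$ on divergence-free fields, and the fact that $Z \mapsto i_Z\Omega$ is a topological (indeed isometric, for $\vol(g) = \Omega$) isomorphism of the relevant $L^2$-spaces, which lets convergence pass freely between vector fields and $(n-1)$-forms.
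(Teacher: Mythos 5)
Your proof is correct and follows essentially the same route as the paper: use $L_X(i_Z\Omega)=i_{[X,Z]}\Omega$ (equivalently $d\,i_X i_Z\Omega$) to translate the bracket into $L_X^{(n-1)}$ acting on closed $(n-1)$-forms, invoke the preceding Corollary in its $C^\infty$ form to approximate by images of exact forms, and pull back along $W\mapsto i_W\Omega$. Your extra verification that this identification is an $L^2$-isometry is a small point the paper leaves implicit, but it is the same argument.
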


    \begin{proof}
      Let $Z \in \mathfrak{X}(M,\Omega)$ be arbitrary. Since
      \begin{displaymath}
        i_{[X,Z]}  \Omega = d i_X i_Z \Omega = L_X (i_Z \Omega)
      \end{displaymath}
      the previous Corollary yields a sequence $(d \xi_j)$ in $C^\infty
      B^{n-1}(M)$ such that $L_X (d\xi_j) \to L_X (i_Z \Omega)$, as $j
      \to \infty$, in the $L^2$-sense. Since $C^\infty B^{n-1}(M)$
      corresponds to $\text{Comm}(X,\Omega)$ via the map $W \mapsto
      i_W \Omega$, there exists a sequence $(W_j)$ in
      $\text{Comm}(X,\Omega)$ such that $d\xi_j = i_{W_j} \Omega$. Thus:
      \begin{align*}
        i_{[X,Z]} \Omega & = L_X (i_Z \Omega) \\
                         & = \lim_{j \to \infty} L_X (d\xi_j) \\
                         & = \lim_{j \to \infty} L_X (i_{W_j} \Omega) \\
                         & = \lim_{j \to \infty} d i_X i_{W_j} \Omega \\
                         & = \lim_{j \to \infty} i_{[X,W_j]} \Omega_j.
      \end{align*}
      It follows that $[X,W_j] \to [X,Z]$, as $j \to \infty$, as
      desired.
    \end{proof}

\section{Proof of the Main Theorem}
\label{sec:proof}

$(\Rightarrow)$ Assume
\begin{displaymath}
  L^2 B^{n-1}(M) \subset
  \overline{\text{image}(L_X)}.
\end{displaymath}
Let $\omega \in C^\infty \Lambda^{n-2}(M)$ be arbitrary. Since
$d\omega \in C^\infty B^{n-1}(M) \subset L^2 B^{n-1}(M)$, there
exists a sequence
$(\Theta_j)$ in $C^1_X \Lambda^{n-1}(M)$ such that
\begin{displaymath}
  L_X \Theta_j \to d\omega,
\end{displaymath}
as $j \to \infty$, in the $L^2$-sense. Let $g$ be an arbitrary smooth
Riemannian metric on $M$. Then:
\begin{align*}
  \int_M d\omega \wedge \alpha & = (-1)^{n-1} \langle d\omega, \star_g
                                 \alpha \rangle_g \\
  & = (-1)^{n-1} \lim_{j \to \infty}  \langle L_X \Theta_j, \star_g \alpha
    \rangle_g \\
                               & = \lim_{j \to \infty} \int_M L_X
                                 \Theta_j \wedge \alpha \\
                               & = 0,
\end{align*}
by integration by parts, since $L_X \alpha = 0$. Thus $\alpha$ is
weakly closed, hence closed in the Stokes sense, by
Lemma~\ref{lem:regularization}. By the Hartman-Frobenius theorem, it
follows that $E^{ss} \oplus E^{uu}$ is uniquely integrable, which, by
\cite{plante72} implies that the flow is topologically conjugate to
the suspension of an Anosov diffeomorphism.

\medskip

\noindent $(\Leftarrow)$ Assume now that $E^{ss} \oplus E^{uu}$ is
uniquely integrable. By the Hartman-Frobenius theorem, $d\alpha$
exists in the Stokes sense and is continuous. Since it is also
invariant, it follows without difficulty that $d\alpha = 0$, also in
the Stokes sense. By Lemma~\ref{lem:regularization}, $d \alpha = 0$
also in the weak sense.

Back to the proof of the Main Theorem, assume that $\Theta = d\xi$ is
a smooth \emph{exact} $(n-1)$-form. Then by
Theorem~\ref{thm:decomposition} we can write
\begin{displaymath}
  \Theta = \hat{\Theta} + c \: i_X \Omega,
\end{displaymath}
for some $\hat{\Theta} \in \overline{\text{image}(L_X)}$ and a
constant $c$. It is enough to show $c = 0$.

Since $\hat{\Theta} \in \overline{\text{image}(L_X)}$, we have
$\Theta = \lim_{j \to \infty} L_X \Theta_j$, for some sequence of smooth
$(n-1)$-forms $(\Theta_j)$ (the limit being in the $L^2$-sense). Observe
that
\begin{displaymath}
  \int_M \alpha \wedge \Theta = \int_M \alpha \wedge d\xi = 0,
\end{displaymath}
by integration by parts and the fact that $\alpha$ is weakly closed.

On the other hand,
\begin{displaymath}
  \int_M \alpha \wedge \Theta = \lim_{j \to \infty} \int_M \alpha
  \wedge L_X \Theta_j + c \int_M \alpha \wedge i_X \Omega = c.
\end{displaymath}
Thus $c = 0$, which implies
$\Theta = \hat{\Theta} \in \overline{\text{image}(L_X)}$. Since
$C^\infty B^{n-1}(M)$ is dense in $L^2 B^{n-1}(M)$ (cf.,
\cite{goldshtein+troyanov+06}), the desired conclusion follows. \qed

\paragraph{Acknowledgement.} I would like to thank the
Department of Mathematics at UC Berkeley for their hospitality during
my sabbatical leave, when this paper was written.


\bibliographystyle{amsalpha}
\bibliography{main.bib}

\end{document}